\newtheorem{teo}{Theorem}[section]
\newtheorem{coro}[teo]{Corollary}
\newtheorem{lm}[teo]{Lemma}
\newtheorem{prop}[teo]{Proposition}
\newtheorem*{main}{Main Theorem}
\newtheorem*{buser}{Euclidean Buser's inequality}
\theoremstyle{definition}
\newtheorem{defi}[teo]{Definition}
\newtheorem{oss}[teo]{Remark}
\newtheorem*{ack}{Acknowledgments}
\numberwithin{equation}{section}
\title[Principal frequencies VS. isoperimetric ratios]{On principal frequencies and isoperimetric ratios\\ in convex sets}
\author[Brasco]{Lorenzo Brasco}
\address[L.\ Brasco]{Dipartimento di Matematica e Informatica
\newline\indent
Universit\`a degli Studi di Ferrara
\newline\indent
Via Machiavelli 35, 44121 Ferrara, Italy}
\email{lorenzo.brasco@unife.it}
\subjclass[2010]{35P15, 49J40, 35J70}
\keywords{Buser's inequality, convex sets, $p-$Laplacian, Cheeger constant, shape optimization}
\begin{document}

\begin{abstract}
On a convex set, we prove that the Poincar\'e-Sobolev constant for functions vanishing at the boundary can be bounded from above by the ratio between the perimeter and a suitable power of the $N-$dimensional measure. This generalizes an old result by P\'olya. As a consequence, we obtain the sharp {\it Buser's inequality} (or reverse Cheeger inequality) for the $p-$Laplacian on convex sets. This is valid in every dimension and for every $1<p<+\infty$.
We also highlight the appearing of a subtle phenomenon in shape optimization, as the integrability exponent varies. 
\end{abstract}

\maketitle
\begin{center}
\begin{minipage}{8cm}
\small
\tableofcontents
\end{minipage}
\end{center}

\section{Introduction}

\subsection{Background}
Let $\Omega\subset\mathbb{R}^N$ be an open bounded convex set, we consider its {\it fundamental frequency} or {\it first eigenvalue of the Dirichlet-Laplacian}, i.e.
\[
\lambda(\Omega):=\inf_{u\in C^\infty_0(\Omega)\setminus\{0\}} \frac{\displaystyle\int_\Omega |\nabla u|^2\,dx}{\displaystyle\int_\Omega |u|^2\,dx}.
\]
An old result by P\'olya (see \cite{Po}), later generalized by Jo\'o and Stach\'o (see \cite{JS}), asserts that $\lambda(\Omega)$ can be bounded from above in a sharp way, by the ratio between the perimeter and the volume of $\Omega$. Namely, it holds
\begin{equation}
\label{polya}
\lambda(\Omega)<\frac{\pi^2}{4}\,\left(\frac{P(\Omega)}{|\Omega|}\right)^2.
\end{equation}
Equality is never attained on bounded convex sets, but the estimate is sharp. A sequence of sets saturating the inequality is indeed given by
\[
\Omega_L=\left(-\frac{L}{2},\frac{L}{2}\right)^{N-1}\times(0,1),\qquad \mbox{ for } L\to +\infty.
\] 
The method of proof by P\'olya is based on the so-called {\it method of interior parallels}. 
\par
Parini in \cite{Pa} recently observed that \eqref{polya} in turn implies the following inequality
\begin{equation}
\label{reverse_parinz}
\lambda(\Omega)<\frac{\pi^2}{4}\,\Big(h_1(\Omega)\Big)^2,
\end{equation}
where $h_1(\Omega)$ is the {\it Cheeger constant} of $\Omega$, defined by
\[
h_1(\Omega):=\inf_{E\subset \Omega} \left\{\frac{P(E)}{|E|}\, :\, |E|>0\right\},
\]
and $P(\,\cdot\,)$ stands for the distributional perimeter of a set in the sense of De Giorgi.
\par
It is useful to recall that, in general, for bounded open sets it holds
\[
\frac{1}{4}\,\Big(h_1(\Omega)\Big)^2<\lambda(\Omega).
\]
This is called {\it Cheeger's inequality}, first proved in \cite{cheeger} in the context of Riemannian manifolds without boundary and in \cite{LW} in the Euclidean setting. Thus we can refer to inequality \eqref{reverse_parinz} as {\it reverse Cheeger's inequality}. 
An estimate of this flavour was first proved by Buser in \cite[Theorem 1.2]{bu} for the Laplace-Beltrami operator on compact Riemannian manifolds without boundary, having positive Ricci curvature. We also refer to \cite{Le} for an alternative proof by Ledoux, which uses the heat semigroup (see also \cite[Theorem 5.2]{Le2} for a finer estimate with a constant independent of the dimension). For this reason, we can also call \eqref{reverse_parinz} {\it Buser's inequality}.
\begin{oss}
In \cite{Pa} inequality \eqref{reverse_parinz} is stated for $N=2$, but it is easy to see that it holds in any dimension. Indeed, the proof of \eqref{reverse_parinz} is just based on:
\begin{itemize}
\item inequality \eqref{polya};
\vskip.2cm
\item the fact that $\lambda(\Omega)\le \lambda (E)$ for every open set $E\subset\Omega$;
\vskip.2cm
\item the convexity of sets $E_\Omega$ attaining the infimum in the definition of $h_1(\Omega)$ (these are called {\it Cheeger sets}).
\end{itemize}
These three facts hold in every dimension.
\end{oss}

\subsection{Main result}
In a nutshell, we can describe the main results of this paper as follows: we generalize P\'olya's inequality \eqref{polya} to the case of the $p-$Laplacian and this in turn permits to generalize Buser's inequality to the $p-$Laplacian. 
\par
In order to describe more precisely our results, for $1<p<+\infty$ and $1\le q\le +\infty$ we introduce the quantity
\[
\lambda_{p,q}(\Omega)=\inf_{u\in C^\infty_0(\Omega)\setminus\{0\}} \frac{\|\nabla u\|_{L^p(\Omega)}^p}{\displaystyle \|u\|_{L^q(\Omega)}^p},
\]
and the one-dimensional Poincar\'e constant
\[
\pi_{p,q}=\min_{u\in W^{1,p}((0,1))\setminus\{0\}} \left\{\frac{\|u'\|_{L^p((0,1))}}{\displaystyle \|u\|_{L^q((0,1))}}\, :\,u(0)=u(1)=0\right\}.
\]
By using the method of interior parallels as in \cite{Po}, we will prove the following result. The generalization of P\'olya's result to the case $p=q$ has been already obtained in \cite{DPG}.
\begin{main}
Let $1<p<+\infty$ and 
\[
\left\{\begin{array}{rl}
1\le q<\dfrac{N\,p}{N-p},& \mbox{ if } p\le N,\\
&\\
1\le q\le +\infty, & \mbox{ if }p>N.
\end{array}
\right.
\] 
Let $\Omega\subset \mathbb{R}^N$ be an open bounded convex set. Then we have\footnote{In the case $q=+\infty$, we use the convention $1/q=0$.}
\begin{equation}
\label{polyapq}
\lambda_{p,q}(\Omega)<\left(\frac{\pi_{p,q}}{2}\right)^p\,\left(\frac{P(\Omega)}{|\Omega|^{1-\frac{1}{p}+\frac{1}{q}}}\right)^p,
\end{equation}
and the inequality is strict. Moreover, the constant is sharp for $q\le p$.
\end{main}
The original statement \eqref{polya} corresponds to take $p=q=2$. Indeed, in this case we have $\pi_{2,2}=\pi$. Thus we observe that the result is already new for the Laplacian, i.e. for $p=2$ and $q\not =2$.
\vskip.2cm\noindent
As in the case $p=2$, we can use the Main Theorem with $q=p$ and get the following generalization of \eqref{reverse_parinz}.
\begin{buser}
Let $1<p<+\infty$ and let $\Omega\subset \mathbb{R}^N$ be an open convex set. Then we have
\[
\lambda_p(\Omega)\le \left(\frac{\pi_p}{2}\right)^p\,\Big(h_1(\Omega)\Big)^p,
\]
and the inequality is sharp.
\end{buser}
Let us now go back to inequality \eqref{polyapq}. We observe that the quantity
\[
\Omega\mapsto \left(\frac{|\Omega|^{1-\frac{1}{p}+\frac{1}{q}}}{P(\Omega)}\right)^p\,\lambda_{p,q}(\Omega),
\]
is scale invariant. Thus we can rephrase the previous result by saying that, if we set
\begin{equation}
\label{SOP}
\lambda^*_{p,q}=\sup\left\{\left(\frac{|\Omega|^{1-\frac{1}{p}+\frac{1}{q}}}{P(\Omega)}\right)^p\,\lambda_{p,q}(\Omega)\, :\, \Omega\subset\mathbb{R}^N \mbox{ open bounded convex set}\right\},
\end{equation}
then 
\[
\lambda^*_{p,q}<\left(\frac{\pi_{p,q}}{2}\right)^p\,
\]
and for $q\le p$ {\it there are no optimal shapes}, only maximizing sequences. These are given for example by
\[
\Omega_L=\left(-\frac{L}{2},\frac{L}{2}\right)^{N-1}\times(0,1),\qquad \mbox{ for } L\to +\infty,
\]
see the proof of the Main Theorem.
\subsection{The case $q>p$}
The fact that we can prove sharpness only for $q\le p$ is not due to a defect in the method of proof, but to the presence of a weird phenomenon. Indeed, we will show in Theorem \ref{teo:existence} that for $q>p$ {\it the situation abruptly changes}: the shape optimization problem \eqref{SOP} {\it does admit a solution}. Thus, the upper bound given by the one-dimensional problem is no more optimal for $q>p$. The problem of providing the sharp value $\lambda^*_{p,q}$ seems to be a challenging task. We make some comments and give some partial results in Remark \ref{oss:palle} and Proposition \ref{prop:palle} below.
\begin{oss}
A similar phenomenon has been observed by Nitsch, Trombetti and the author, in the {\it Neumann case}, i.e. for the Poincar\'e constant
\[
\mu_{p,q}(\Omega)=\inf_{u\in C^1(\overline{\Omega})\setminus\{0\}} \left\{\frac{\displaystyle\int_\Omega |\nabla u|^p\,dx}{\displaystyle\left(\int_\Omega |u|^q\,dx\right)^\frac{p}{q}}\,:\, \int_\Omega |u|^{q-2}\,u\,dx=0\right\},
\]
and the related shape optimization problem
\[
\sup\left\{\Big(\mathrm{diam}(\Omega)\Big)^{p-N+N\,\frac{p}{q}}\,\mu_{p,q}(\Omega)\, :\, \Omega\subset\mathbb{R}^N \mbox{ open bounded convex set}\right\}.
\]
We refer to \cite[Theorem 4.4]{BNT} for more details.
\end{oss}

\subsection{Plan of the paper}

We set the notation in Section \ref{sec:preliminaries}, then in Section \ref{sec:proof} we give the proof of the Main Theorem. We discuss some of its consequences in Section \ref{sec:consequence}, notably we prove Buser's inequality. With Section \ref{sec:pq} we try to shed some light on the shape optimization problem \eqref{SOP}: the main result in this part is Theorem \ref{teo:existence}.
\par
The paper is complemented with two appendices: the first one concerning the one-dimensional constant $\pi_{p,q}$; the second one proving some estimates (containing inradius, perimeter, volume and diameter) for convex sets.

\begin{ack} 
The author thanks Eleonora Cinti for some discussions on Proposition \ref{lm:geometrico} and Michel Ledoux for having pointed out reference \cite{Le2}. Bozhidar Velichkov is gratefully acknowledged for having revitalized author's interest in Buser's inequality, after a period of neglect. This paper would have not been written without his impulse, we wish to warmly thank him. 
\par
This work has been finalized during the conferences ``{\it Variational and PDE problems in Geometric Analysis}'' and ``{\it Recent advances in Geometric Analysis}'' held in June 2018 in Bologna and Pisa, respectively. The organizers and the hosting institutions are kindly acknowledged.
\end{ack}

\section{Preliminaries}

\label{sec:preliminaries}

\subsection{Notation}

For an open set $\Omega\subset\mathbb{R}^N$, we indicate by $|\Omega|$ its $N-$dimensional Lebesgue measure. We use the standard notations
\[
B_R(x_0)=\{x\in\mathbb{R}^N\, :\, |x-x_0|<R\}\qquad \mbox{ and }\qquad \omega_N=|B_1(0)|.
\]
For an open set $\Omega\subset\mathbb{R}^N$ with Lipschitz boundary, we define the {\it distance function}
\[
\delta_\Omega(x)=\inf_{y\in\partial\Omega} |x-y|,\qquad x\in\Omega,
\]
and
\[
R_\Omega=\sup_{x\in\Omega} \delta_\Omega(x).
\]
The last quantity is called {\it inradius of $\Omega$} and it coincides with the radius of the largest ball inscribed in $\Omega$. 

\subsection{Poincar\'e-Sobolev constants}
For every $1<p<+\infty$, we set
\[
p^*=\left\{\begin{array}{cc}
\dfrac{N\,p}{N-p},& \mbox{ if }p<N,\\
&\\
+\infty, & \mbox{ if }p\ge N.
\end{array}
\right.
\]
Then if $\Omega\subset\mathbb{R}^N$ is an open set, we define its sharp Poincar\'e-Sobolev constant
\[
\lambda_{p,q}(\Omega)=\inf_{u\in C^\infty_0(\Omega)} \left\{\int_\Omega |\nabla u|^p\,dx\, :\, \|u\|_{L^q(\Omega)}=1\right\},\qquad \mbox{맍or }\left\{\begin{array}{rl}
1\le q<p^*,& \mbox{ if } p\le N,\\
&\\
1\le q\le +\infty, & \mbox{ if }p>N.
\end{array}
\right.
\]
For $p=q$, we will use the convention of writing 
\[
\lambda_p(\Omega) \qquad \mbox{ in place of }\qquad \lambda_{p,p}(\Omega).
\] 
Whenever $\Omega\subset\mathbb{R}^N$ is bounded, the infimum above is attained on the homogeneous Sobolev space $\mathcal{D}^{1,p}_0(\Omega)$. The latter is defined as the completion of $C^\infty_0(\Omega)$ with respect to the norm
\[
u\mapsto \left(\int_\Omega |\nabla u|^p\,dx\right)^\frac{1}{p}.
\]
For $q=1$, the quantity $1/\lambda_{p,1}(\Omega)$ is also called {\it $p-$torsional rigidity}. This is usually denoted by
\[
T_p(\Omega)=\frac{1}{\lambda_{p,1}(\Omega)}=\sup_{u\in C^\infty_0(\Omega)\setminus \{0\}} \frac{\displaystyle\left(\int_\Omega u\,dx\right)^p}{\displaystyle \int_{\Omega}|\nabla u|^p\,dx}.
\]
For an open set $\Omega\subset\mathbb{R}^N$, we consider its {\it Cheeger constant}, defined by
\[
h_1(\Omega):=\inf_{E\subset \Omega} \left\{\frac{P(E)}{|E|}\, :\, |E|>0 \mbox{ and } E \mbox{ is bounded}\right\}.
\]
A minimizing set for the problem above is called {\it Cheeger set} of $\Omega$.
\vskip.2cm\noindent
For later reference, we recall the following simple result. We point out that we are not assuming anything on the open sets, thus the proof is slightly more complicated than what one could think at first sight.
\begin{lm}[Right continuity in $q$]
\label{lm:continuity}
Let $1<p<+\infty$, for every $\Omega\subset\mathbb{R}^N$ open set, we have
\[
\lim_{q\searrow p} \lambda_{p,q}(\Omega)=\lambda_p(\Omega).
\]
\end{lm}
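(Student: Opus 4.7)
The plan is to prove the two one-sided inequalities $\limsup_{q \searrow p} \lambda_{p,q}(\Omega) \leq \lambda_p(\Omega) \leq \liminf_{q \searrow p} \lambda_{p,q}(\Omega)$ separately.

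For the upper bound, I would fix any $u \in C^\infty_0(\Omega) \setminus \{0\}$ and use it as a competitor for each $\lambda_{p,q}(\Omega)$. Since $u$ is bounded and supported on a set of finite Lebesgue measure, Lebesgue's dominated convergence theorem gives $\|u\|_{L^q(\Omega)} \to \|u\|_{L^p(\Omega)}$ as $q \searrow p$. Hence
\[
\limsup_{q \searrow p} \lambda_{p,q}(\Omega) \leq \frac{\|\nabla u\|_{L^p}^{p}}{\|u\|_{L^p}^{p}},
\]
and taking the infimum over $u$ yields $\limsup_{q \searrow p} \lambda_{p,q}(\Omega) \leq \lambda_p(\Omega)$.

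For the lower bound, since we cannot assume $\Omega$ is bounded, the plan is to use a Gagliardo--Nirenberg inequality on $\mathbb{R}^N$ with a purely dimensional constant. I would fix an auxiliary exponent $r>p$ so that for every $u\in C^\infty_0(\mathbb{R}^N)$ one has
\[
\|u\|_{L^r(\mathbb{R}^N)} \leq C(N,p,r)\,\|\nabla u\|_{L^p(\mathbb{R}^N)}^{\beta}\,\|u\|_{L^p(\mathbb{R}^N)}^{1-\beta},\qquad \beta := N\left(\frac{1}{p}-\frac{1}{r}\right);
\]
concretely, take $r=p^*$ if $p<N$ (pure Sobolev, $\beta=1$), any $r\in(p,+\infty)$ if $p=N$, and $r=+\infty$ if $p>N$ (Morrey). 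By log-convexity of the $L^q$ norms, for $q\in(p,r)$ one has $\|u\|_{L^q}\leq \|u\|_{L^p}^{\eta}\|u\|_{L^r}^{1-\eta}$ for the unique $\eta=\eta(p,q,r)\in(0,1)$ given by $1/q=\eta/p+(1-\eta)/r$, and $\eta\to 1$ as $q\searrow p$. Combining the two estimates, raising to the $p$-th power and rearranging, the ratio $\|\nabla u\|_{L^p}^p/\|u\|_{L^q}^p$ is bounded below by $C^{-(1-\eta)p}\bigl(\|\nabla u\|_{L^p}/\|u\|_{L^p}\bigr)^{p(1-\beta(1-\eta))}$. Taking the infimum over $u\in C^\infty_0(\Omega)\setminus\{0\}$ and using the homogeneity of this ratio, I obtain
\[
\lambda_{p,q}(\Omega)\geq C^{-(1-\eta)p}\,\lambda_p(\Omega)^{\,1-\beta(1-\eta)}.
\]
Since $\eta\to 1$ as $q\searrow p$, both factors converge to their natural limits and the right-hand side tends to $\lambda_p(\Omega)$, which yields the desired lower bound (the cases $\lambda_p(\Omega)=0$ or $\Omega=\emptyset$ being trivial).

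The main obstacle is precisely the absence of any boundedness assumption on $\Omega$: if one knew $|\Omega|<+\infty$, Hölder's inequality alone would give $\lambda_{p,q}(\Omega)\leq |\Omega|^{1-p/q}\lambda_p(\Omega)$, which takes care of the upper bound almost for free, but producing a matching lower bound in the unbounded setting forces the use of a global Sobolev-type estimate on $\mathbb{R}^N$ with a universal constant. This is also why the borderline case $p=N$ must be handled by means of a finite auxiliary exponent $r$, since the Morrey embedding that naturally provides $r=+\infty$ breaks down at $p=N$.
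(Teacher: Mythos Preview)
Your proof is correct and follows the same two-step plan as the paper (upper bound via a fixed test function, lower bound via interpolation between $L^p$ and a higher Lebesgue norm), but the implementation of the lower bound is genuinely different. The paper fixes an auxiliary exponent $q_0\in(p,p^*)$ and controls $\|u\|_{L^{q_0}}$ through the \emph{domain-dependent} constant $\lambda_{p,q_0}(\Omega)$, obtaining the reverse H\"older inequality
\[
\|u\|_{L^{q_0}}^{p}\le \frac{\lambda_{p,q}(\Omega)+\varepsilon}{\lambda_{p,q_0}(\Omega)}\,\|u\|_{L^q}^{p},
\]
which, combined with interpolation, yields $\lambda_p(\Omega)\le (\lambda_{p,q}(\Omega)+\varepsilon)^{1/\vartheta(q)}\lambda_{p,q_0}(\Omega)^{1-1/\vartheta(q)}$. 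This requires knowing that $\lambda_{p,q_0}(\Omega)>0$ whenever $\lambda_p(\Omega)>0$, a nontrivial equivalence the paper imports from Maz'ya. You instead control the higher norm via the global Gagliardo--Nirenberg inequality on $\mathbb{R}^N$, obtaining directly
\[
\lambda_{p,q}(\Omega)\ge C^{-(1-\eta)p}\,\lambda_p(\Omega)^{1-\beta(1-\eta)},
\]
with a universal constant $C$. Your route is more self-contained (no appeal to the Maz'ya-type equivalence) at the price of a mild case distinction in the choice of $r$ according to whether $p<N$, $p=N$ or $p>N$; the paper's route is uniform in $p$ but outsources the key positivity fact. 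Note also that your computation gives $\beta(1-\eta)=N(1/p-1/q)$, so the exponent $1-\beta(1-\eta)$ is indeed positive precisely for $q<p^*$, which is the range where the statement is meaningful.
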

\begin{proof}
We first observe that for $p<q<p^*$ we have
\[
\lambda_{p,q}(\Omega)>0 \qquad \Longleftrightarrow\qquad \lambda_{p}(\Omega)>0,
\]
see \cite[Theorem 15.4.1]{maz} (and also \cite[Remark 4.4]{BFR} for a different proof).
We can thus assume that $\Omega$ is such that $\lambda_p(\Omega)>0$, otherwise there is nothing to prove. For every $\varepsilon>0$, we take $u_\varepsilon\in C^\infty_0(\Omega)$ such that
\[
\int_\Omega |\nabla u_\varepsilon|^p\,dx<\lambda_p(\Omega)+\varepsilon\qquad \mbox{마nd }\qquad \int_\Omega|u_\varepsilon|^p\,dx=1.
\]
Let us call $\Omega_\varepsilon$ the support of $u_\varepsilon$, then by H\"older inequality we have for $q>p$
\[
\lambda_{p,q}(\Omega)\le \frac{\displaystyle\int_\Omega |\nabla u_\varepsilon|^p\,dx}{\displaystyle\left(\int_\Omega |u_\varepsilon|^q\,dx\right)^\frac{p}{q}}\le \frac{\displaystyle\int_\Omega |\nabla u_\varepsilon|^p\,dx}{\displaystyle \int_\Omega |u_\varepsilon|^p\,dx}\,|\Omega_\varepsilon|^{1-\frac{p}{q}}<|\Omega_\varepsilon|^{1-\frac{p}{q}}\,(\lambda_p(\Omega)+\varepsilon).
\]
This implies that
\[
\limsup_{q\searrow p} \lambda_{p,q}(\Omega)\le \lambda_p(\Omega)+\varepsilon,
\]
for every $\varepsilon>0$. If we now prove that
\[
\liminf_{q\searrow p} \lambda_{p,q}(\Omega)\ge \lambda_p(\Omega),
\]
this would give the desired conclusion. We fix an exponent $p<q_0<p^*$, then for every $\varepsilon>0$ and every $p<q<q_0$, we take $u_{\varepsilon,q}\in C^\infty_0(\Omega)$ such that
\[
\int_\Omega |\nabla u_{\varepsilon,q}|^p\,dx<\lambda_{p,q}(\Omega)+\varepsilon\qquad \mbox{마nd }\qquad \int_\Omega|u_{\varepsilon,q}|^q\,dx=1.
\]
By interpolation in Lebesgue spaces, we have
\begin{equation}
\label{interpol}
\left(\int_\Omega|u_{\varepsilon,q}|^q\,dx\right)^\frac{p}{q}\le\left(\int_\Omega|u_{\varepsilon,q}|^p\,dx\right)^{\vartheta(q)}\,\left(\int_\Omega|u_{\varepsilon,q}|^{q_0}\,dx\right)^{\frac{p}{q_0}\,(1-\vartheta(q))},
\end{equation}
with 
\[
\vartheta(q)=\frac{p}{q}\,\frac{q_0-q}{q_0-p}.
\]
On the other hand, by the choice of $u_{\varepsilon,q}$ and the definition of $\lambda_{p,q_0}(\Omega)$ we have
\[
\lambda_{p,q_0}(\Omega)\,\left(\int_\Omega |u_{\varepsilon,q_0}|^{q_0}\,dx\right)^\frac{p}{q_0}\le\int_\Omega |\nabla u_{\varepsilon,q}|^p\,dx<(\lambda_{p,q}(\Omega)+\varepsilon)\,\left(\int_{\Omega} |u_{\varepsilon,q}|^q\,dx\right)^\frac{p}{q},
\]
that is, we have the reverse H\"older inequality
\[
\left(\int_\Omega |u_{\varepsilon,q}|^{q_0}\,dx\right)^\frac{p}{q_0}<\frac{\lambda_{p,q}(\Omega)+\varepsilon}{\lambda_{p,q_0}(\Omega)}\,\left(\int_\Omega |u_{\varepsilon,q}|^{q}\,dx\right)^\frac{p}{q}.
\]
We can spend this information in \eqref{interpol}, so to obtain
\begin{equation}
\label{bordello}
\left(\int_\Omega|u_{\varepsilon,q}|^q\,dx\right)^\frac{p}{q}\le\left(\int_\Omega|u_{\varepsilon,q}|^p\,dx\right)\,\left(\frac{\lambda_{p,q}(\Omega)+\varepsilon}{\lambda_{p,q_0}(\Omega)}\right)^{\frac{1}{\vartheta(q)}-1}.
\end{equation}
By using $u_{\varepsilon,q}$ as an admissible test function for $\lambda_p(\Omega)$ and \eqref{bordello}, we get
\[
\begin{split}
\lambda_{p}(\Omega)&\le \frac{\displaystyle\int_\Omega |\nabla u_{\varepsilon,q}|^p\,dx}{\left(\displaystyle\int_\Omega|u_{\varepsilon,q}|^q\,dx\right)^\frac{p}{q}}\,\left(\frac{\lambda_{p,q}(\Omega)+\varepsilon}{\lambda_{p,q_0}(\Omega)}\right)^{\frac{1}{\vartheta(q)}-1}\\
&\le (\lambda_{p,q}(\Omega)+\varepsilon)^\frac{1}{\vartheta(q)}\,\left(\frac{1}{\lambda_{p,q_0}(\Omega)}\right)^{\frac{1}{\vartheta(q)}-1}.
\end{split}
\] 
By observing that $\vartheta(q)$ goes to $1$ as $q$ goes to $p$, we get from the last estimate
\[
\lambda_p(\Omega)=\lim_{q\searrow p} \lambda_{p}(\Omega)^{\vartheta(q)} \le \liminf_{q\searrow p} \left[(\lambda_{p,q}(\Omega)+\varepsilon)\,\left(\frac{1}{\lambda_{p,q_0}(\Omega)}\right)^{1-\vartheta(q)}\right]= \liminf_{q\searrow p} \lambda_{p,q}(\Omega)+\varepsilon.
\]
As $\varepsilon>0$ is arbitrary, we get the desired conclusion.
\end{proof}
\begin{oss}[Left continuity in $q$]
If we do not take any assumption on the open set $\Omega$, in general {\it it is not true} that 
\[
\lim_{q\nearrow p} \lambda_{p,q}(\Omega)=\lambda_p(\Omega).
\]
As a simple counter-example, one can take the slab
\[
\Omega=\mathbb{R}^{N-1}\times (0,1).
\]
Indeed, in this case we have (see Lemma \ref{lm:slab} below)
\[
\lambda_p(\mathbb{R}^{N-1}\times(0,1))>0.
\]
On the other hand, for $q<p$ we have that 
\[
\lambda_{p,q}(\Omega)>0 \qquad \Longleftrightarrow\qquad \mbox{ the embedding }\mathcal{D}^{1,p}_0(\Omega)\hookrightarrow L^q(\Omega) \mbox{ is compact},
\]
see \cite[Theorem 1.2]{BR}.
Of course, the last property can not be true for the slab $\mathbb{R}^{N-1}\times (0,1)$, because the set is invariant with respect to translations with respect to the first $N-1$ variables. This implies that
\[
\lim_{q\nearrow p}\lambda_{p,q}(\mathbb{R}^{N-1}\times (0,1))=0< \lambda_{p}(\mathbb{R}^{N-1}\times (0,1)).
\] 
A sufficient condition ensuring left continuity in $q$ is $|\Omega|<+\infty$, which is however not necessary.
\end{oss}
The next property can be proven quite easily by appealing to H\"older inequality, we omit the details.
\begin{lm}[Monotonicity]
\label{lm:monotone}
Let $1<p<+\infty$ and let $\Omega\subset\mathbb{R}^N$ be an open set such that $|\Omega|<+\infty$ and $P(\Omega)<+\infty$. Then 
\[
q\mapsto \lambda_{p,q}(\Omega)\,\left(\frac{|\Omega|^{1-\frac{1}{p}+\frac{1}{q}}}{P(\Omega)}\right)^p,
\]
is a monotone non-increasing function.
\end{lm}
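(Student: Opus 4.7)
The plan is to reduce the claim to the purely Lebesgue-theoretic statement that $q\mapsto \lambda_{p,q}(\Omega)\,|\Omega|^{p/q}$ is non-increasing, since the extra factor $|\Omega|^{p-1}/P(\Omega)^p$ appearing in the statement is a $q$-independent constant once $\Omega$ is fixed (and both $|\Omega|$ and $P(\Omega)$ are finite and nonzero by hypothesis). The hypothesis $|\Omega|<+\infty$ is exactly what is needed to use H\"older's inequality on the constant function $1\in L^r(\Omega)$ for any $r$; the hypothesis $P(\Omega)<+\infty$ is needed only to make the normalization constant finite and strictly positive.

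Fix $1\le q_1<q_2$ (both in the admissible range). For any $u\in C^\infty_0(\Omega)$, H\"older's inequality applied with the conjugate exponents $q_2/q_1$ and $q_2/(q_2-q_1)$ gives
\[
\int_\Omega |u|^{q_1}\,dx=\int_\Omega |u|^{q_1}\cdot 1\,dx\le \left(\int_\Omega |u|^{q_2}\,dx\right)^{q_1/q_2}\,|\Omega|^{1-q_1/q_2},
\]
that is,
\[
\|u\|_{L^{q_1}(\Omega)}\le \|u\|_{L^{q_2}(\Omega)}\,|\Omega|^{\frac{1}{q_1}-\frac{1}{q_2}}.
\]
Raising to the power $p$ and rearranging, one obtains the key pointwise inequality for Rayleigh quotients:
\[
\frac{\|\nabla u\|_{L^p(\Omega)}^p}{\|u\|_{L^{q_2}(\Omega)}^p}\,|\Omega|^{p/q_2}\le \frac{\|\nabla u\|_{L^p(\Omega)}^p}{\|u\|_{L^{q_1}(\Omega)}^p}\,|\Omega|^{p/q_1}.
\]
Taking the infimum over $u\in C^\infty_0(\Omega)\setminus\{0\}$ on both sides yields
\[
\lambda_{p,q_2}(\Omega)\,|\Omega|^{p/q_2}\le \lambda_{p,q_1}(\Omega)\,|\Omega|^{p/q_1}.
\]
Finally, multiplying both sides by the positive $q$-independent constant $|\Omega|^{p-1}/P(\Omega)^p$ produces the monotonicity of the quantity stated in the lemma.

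There is essentially no obstacle here: the only nontrivial ingredient is that $|\Omega|<+\infty$ so that the indicator function $\mathbf{1}_\Omega$ lies in every $L^r$, which is precisely the hypothesis that allows the H\"older step to compare $L^{q_1}$ with $L^{q_2}$ norms on $\Omega$. This is also the reason why the author does not need to assume anything like regularity of $\partial\Omega$ beyond the finiteness of $P(\Omega)$, which only enters through the normalization factor.
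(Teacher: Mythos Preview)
Your proof is correct and follows exactly the approach indicated in the paper, which merely states that the result ``can be proven quite easily by appealing to H\"older inequality'' and omits the details. You have supplied precisely those details: isolate the $q$-independent factor $|\Omega|^{p-1}/P(\Omega)^p$, and use H\"older to compare $\|u\|_{L^{q_1}}$ with $\|u\|_{L^{q_2}}$ on a set of finite measure.
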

Finally, for $1<p<+\infty$ and $1\le q\le +\infty$ we recall the notation already used in the Introduction
\[
\pi_{p,q}=\min_{u\in W^{1,p}((0,1))\setminus\{0\}} \left\{\frac{\|u'\|_{L^p((0,1))}}{\displaystyle \|u\|_{L^q((0,1))}}\, :\,u(0)=u(1)=0\right\}.
\]
Here as well, we will use the shortcut notation $\pi_p$ in the case $q=p$.
By scaling, we easily get for every $L>0$
\begin{equation}
\label{1dim}
\min_{u\in W^{1,p}((0,L))\setminus\{0\}} \left\{\frac{\|u'\|_{L^p((0,L))}^p}{\displaystyle \|u\|_{L^q((0,L))}^p}\, :\,u(0)=u(L)=0\right\}=\frac{\Big(\pi_{p,q}\Big)^p}{L^{p-1+\frac{p}{q}}}.
\end{equation}
\begin{oss}[Some explicit values]
\label{oss:costanti}
We have already observed that $\pi_{2}=\pi$ 
and the corresponding extremals are given by
\[
u(t)=C\,\sin\left(\pi\,t\right),\qquad \mbox{ where } C\in\mathbb{R}\setminus\{0\}.
\]
We also have
\[
\pi_{p,1}=2\,\left(\frac{2\,p-1}{p-1}\right)^\frac{p-1}{p},
\]
and the corresponding extremals are given by
\[
u(t)=C\,\left(\left(\frac{1}{2}\right)^\frac{p}{p-1}-\left|t-\frac{1}{2}\right|^\frac{p}{p-1}\right),\qquad \mbox{ where } C\in\mathbb{R}\setminus\{0\}.
\]
We refer to Appendix \ref{sec:A} and to \cite[Section 5]{FL} for more details on the constant $\pi_{p,q}$.
\end{oss}

\section{Proof of the Main Theorem}
\label{sec:proof}

We divide the proof in two parts: we first prove the inequality and then discuss the equality cases. We write the proof for the case of a finite $q$: when $p>N$ and $q=+\infty$, the proof has to be suitably modified. We leave the details to the reader.
\vskip.2cm\noindent
{\bf Part 1: inequality}. We take $\varphi$ a $W^{1,p}$ function of one variable, defined on $[0,+\infty)$ and such that $\varphi(0)=0$. We insert the test function
\[
v=\varphi \circ \delta_\Omega,
\]
in the Rayleigh quotient defining $\lambda_{p,q}(\Omega)$. For every $\tau\in[0,R_\Omega]$, we indicate
\[
\Omega_\tau=\{x\in\Omega\, :\, \delta_\Omega(x)>\tau\}.
\] 
By using the Coarea Formula, we get
\[
\lambda_{p,q}(\Omega)\le \frac{\displaystyle \int_0^{R_\Omega} |\varphi'(\tau)|^p\,P(\Omega_\tau)\,d\tau}{\displaystyle\left(\int_0^{R_\Omega} |\varphi(\tau)|^q\,P(\Omega_\tau)\,d\tau\right)^\frac{p}{q}}.
\]
We now set $\xi(\tau)=|\Omega_\tau|$
and use the change of variable
\[
s=\xi(\tau)\qquad \mbox{ so that }\qquad ds=\xi'(\tau)\,d\tau=-P(\Omega_{\tau})\,d\tau.
\]
Thus we get
\begin{equation}
\label{stimetta}
\lambda_{p,q}(\Omega)\le \frac{\displaystyle \int_0^{|\Omega|} |\varphi'(\xi^{-1}(s))|^p\,\,ds}{\displaystyle\left(\int_0^{|\Omega|} |\varphi(\xi^{-1}(s))|^q\,ds\right)^\frac{p}{q}}.
\end{equation}
We now take $\psi\in W^{1,p}((0,1))$ such that $\psi\left(1\right)=0$,
to be optimal in the one-dimensional problem 
\[
A_{p,q}:=\min_{u\in W^{1,p}((0,1))\setminus\{0\}}\left\{\frac{\displaystyle\int_0^1|u'|^p\,dt}{\displaystyle\left(\int_0^1|u|^q\,dt\right)^\frac{p}{q}}\, :\, u(0)=0\, \mbox{ or }\,u(1)=0\right\}
\]
Then we make the choice
\[
\varphi(t)=\psi\left(\frac{\xi(t)}{|\Omega|}\right),\qquad t\in\left(0,R_\Omega\right),
\]
in \eqref{stimetta}. This gives
\[
\lambda_{p,q}(\Omega)\le \frac{1}{|\Omega|^p}\,\frac{\displaystyle \int_0^{|\Omega|} \left|\psi'\left(\frac{s}{|\Omega|}\right)\right|^p\,|\xi'(\xi^{-1}(s))|^p\,ds}{\displaystyle\left(\int_0^{|\Omega|} \left|\psi\left(\frac{s}{|\Omega|}\right)\right|^q\,ds\right)^\frac{p}{q}}.
\]
By observing that\footnote{We use here that for a convex set, the function $\tau\mapsto P(\Omega_\tau)$ is monotone decreasing.}
\begin{equation}
\label{perimetri}
|\xi'(\xi^{-1}(s))|=P_{\Omega_{\xi^{-1}(s)}}\le P(\Omega),
\end{equation}
we finally get
\[
\lambda_{p,q}(\Omega)\le \frac{\displaystyle \int_0^{|\Omega|}\left|\psi'\left(\frac{s}{|\Omega|}\right)\right|^p\,ds}{\displaystyle\left(\int_0^{|\Omega|}\left|\psi\left(\frac{s}{|\Omega|}\right)\right|^q\,ds\right)^\frac{p}{q}}\,\left(\frac{P(\Omega)}{|\Omega|}\right)^p=A_{p,q}\,\frac{P(\Omega)^p}{|\Omega|^{p-1+\frac{p}{q}}}.
\]
By using Lemma \ref{lm:1D}, we conclude the proof.
\vskip.2cm\noindent
{\bf Part 2: sharpness}. We first observe that inequality is strict for every open bounded convex set $\Omega\subset\mathbb{R}^N$. Indeed, if for some $\Omega$ it would hold
\[
\lambda_{p,q}(\Omega)=\left(\frac{\pi_{p,q}}{2}\right)^p\,\left(\frac{P(\Omega)}{|\Omega|^{1-\frac{1}{p}+\frac{1}{q}}}\right)^p,
\]
in particular we would get equality in \eqref{perimetri}, for almost every $s\in(0,|\Omega|)$. This is clearly not possible for a bounded set.
\par
We now prove that the estimate is sharp for $1\le q\le p$. For every $L>0$, we take the convex set
\[
\Omega_L=\left(-\frac{L}{2},\frac{L}{2}\right)^{N-1}\times (0,1).
\]
We take $u_L\in \mathcal{D}^{1,p}_0(\Omega_L)\setminus\{0\}$ to be optimal for $\lambda_{p,q}(\Omega_L)$, i.e.
\[
\lambda_{p,q}(\Omega_L)=\frac{\displaystyle\int_{\Omega_L}|\nabla u_L|^p\,dx}{\displaystyle\left( \int_{\Omega_L} |u_L|^q\,dx\right)^\frac{p}{q}}\ge \frac{\displaystyle\int_{[-L/2,L/2]^{N-1}}\left(\int_0^1 |(u_L)_{x_N}|^p\,dx_N\right)\,dx'}{\displaystyle\left( \int_{[-L/2,L/2]^{N-1}} \left(\int_0^1 |u_L|^q\,dx_N\right)\,dx'\right)^\frac{p}{q}}.
\]
By performing the change of variable $x'=L\,y'$ with $y'\in [-1/2,1/2]^{N-1}$, we get
\[
\lambda_{p,q}(\Omega_L)\ge \frac{1}{L^{(N-1)\,\left(\frac{p}{q}-1\right)}}\, \frac{\displaystyle\int_{[-1/2,1/2]^{N-1}}\left(\int_0^1 |(u_L(L\,y',x_N))_{x_N}|^p\,dx_N\right)\,dy'}{\displaystyle\left( \int_{[-1/2,1/2]^{N-1}} \left(\int_0^1 |u_L(L\,y',x_N)|^q\,dx_N\right)\,dy'\right)^\frac{p}{q}}.
\]
We now observe that for almost every $y'\in [-1/2,1/2]^{N-1}$, the function $x_N\mapsto u_L(L\,y',x_N)$ is admissible in the variational problem
\[
\min_{u\in W^{1,p}((0,1))\setminus\{0\}}\left\{\frac{\displaystyle\int_{0}^1|u'|^p\,dt}{\displaystyle\left(\int_{0}^1|u|^q\,dt\right)^\frac{p}{q}}\, :\, u(0)=u(1)=0\right\}=\Big(\pi_{p,q}\Big)^p.
\] 
Thus, we get
\[
\lambda_{p,q}(\Omega_L)\ge \frac{\Big(\pi_{p,q}\Big)^p}{L^{(N-1)\,\left(\frac{p}{q}-1\right)}}\, \frac{\displaystyle\int_{[-1/2,1/2]^{N-1}}\left(\int_0^1 |u_L(L\,y',x_N)|^q\,dx_N\right)^\frac{p}{q}\,dy'}{\displaystyle\left( \int_{[-1/2,1/2]^{N-1}} \left(\int_0^1 |u_L(L\,y',x_N)|^q\,dx_N\right)\,dy'\right)^\frac{p}{q}}.
\]
By observing that we have $p/q\ge 1$, Jensen's inequality finally implies
\begin{equation}
\label{hey!}
\lambda_{p,q}(\Omega_L)\ge \frac{\Big(\pi_{p,q}\Big)^p}{L^{(N-1)\,\left(\frac{p}{q}-1\right)}}.
\end{equation}
By direct computation, we get
\[
P(\Omega_L)=2\,(N-1)\,L^{N-2}+2\,L^{N-1}\qquad \mbox{ and }\qquad |\Omega_L|=L^{N-1},
\]
that is
\begin{equation}
\label{rapportino}
\left(\frac{P(\Omega_L)}{|\Omega_L|^{1-\frac{1}{p}+\frac{1}{q}}}\right)^p\sim \frac{2^p}{L^{(N-1)\,\left(
\frac{p}{q}-1\right)}},\qquad \mbox{ for }L\to +\infty.
\end{equation}
By using this information in \eqref{hey!}, we obtain
\begin{equation}
\label{egolo!}
\lambda_{p,q}(\Omega_L)\ge \frac{\Big(\pi_{p,q}\Big)^p}{L^{(N-1)\,\left(\frac{p}{q}-1\right)}}\sim \left(\frac{\pi_{p,q}}{2}\right)^p\,\left(\frac{P(\Omega_L)}{|\Omega_L|^{1-\frac{1}{p}+\frac{1}{q}}}\right)^p,\qquad \mbox{ for }L\to+\infty.
\end{equation}
On the other hand, by the first part of the proof we also have
\[
\lambda_{p,q}(\Omega_L)< \left(\frac{\pi_{p,q}}{2}\right)^p\,\left(\frac{P(\Omega_L)}{|\Omega_L|^{1-\frac{1}{p}+\frac{1}{q}}}\right)^p.
\]
This and \eqref{egolo!} finally give
\begin{equation}
\label{tuttassieme}
\lim_{L\to+\infty}\lambda_{p,q}(\Omega_L)\,\left(\frac{|\Omega_L|^{1-\frac{1}{p}+\frac{1}{q}}}{P(\Omega_L)}\right)^p=\left(\frac{\pi_{p,q}}{2}\right)^p,
\end{equation}
which proves the optimality of the estimate for $q\le p$. $\hfill\square$

\begin{oss}[The case $q>p$]
The above computations badly fail to show sharpness in the case $q>p$. Indeed, in this case by \eqref{rapportino}
\[
\left(\frac{P(\Omega_L)}{|\Omega_L|^{1-\frac{1}{p}+\frac{1}{q}}}\right)^p\sim \frac{2^p}{L^{(N-1)\,\left(
\frac{p}{q}-1\right)}},\qquad \mbox{ as } L\to +\infty,
\]
and the last quantity diverges to $+\infty$, thanks to the fact that now $p/q-1<0$. On the other hand, it is not difficult to show that
\[
\lim_{L\to+\infty} \lambda_{p,q}(\Omega_L)=\lambda_{p,q}(\mathbb{R}^{N-1}\times (0,1))<+\infty.
\]
We will show in Section \ref{sec:pq} that there is a deep reason behind this failure.
\end{oss}

\begin{oss}[More general sets]
The crucial ingredient of the proof of the Main Theorem is the fact that 
\[ 
\tau\mapsto P(\Omega_\tau) \mbox{ is monotone decreasing} \qquad \mbox{마nd }\qquad \lim_{t\to 0^+} P(\Omega_t)=P(\Omega). 
\]
Thus the same P\'olya--type estimate on $\lambda_{p,q}$ can be obtained for sets having such a property. This happens for example if $\Omega\subset\mathbb{R}^N$ is an open bounded set with $C^1$ boundary, such that the distance function $\delta_\Omega$ is {\it weakly superharmonic in $\Omega$}, i.e. if
\begin{equation}
\label{distance}
\int_\Omega \langle \nabla \delta_\Omega,\nabla \varphi\rangle\,dx\ge 0,
\end{equation}
for every non-negative $\varphi\in W^{1,2}(\Omega)$ with compact support in $\Omega$. Indeed, by taking a test function of the form
\[
\varphi(x)=\psi(\delta_\Omega(x)),\qquad \psi\ge 0,
\]
and use Coarea Formula, from \eqref{distance} we get
\[
0\le \int_\Omega |\nabla \delta_\Omega|^2\,\psi'(\delta_\Omega)\,dx=\int_0^{R_\Omega} \psi'(\tau)\,P(\Omega_\tau)\,d\tau.
\]
We now choose $0<s_0<s_1<R_\Omega$ and take for $\varepsilon\ll 1$
\[
\psi_\varepsilon(t)=\left\{\begin{array}{rl}
0, & \mbox{ if }0\le t\le s_0-\varepsilon,\\
\dfrac{1}{2\,\varepsilon}\,(t-s_0+\varepsilon),& \mbox{ if } s_0-\varepsilon<t<s_0+\varepsilon,\\
1& \mbox{ if } s_0+\varepsilon\le t\le s_1-\varepsilon,\\
\dfrac{1}{2\,\varepsilon}\,(s_1+\varepsilon-t),& \mbox{ if } s_1-\varepsilon<t<s_1+\varepsilon,\\
0, & \mbox{ if } s_1+\varepsilon\le t\le 1.
\end{array}
\right.
\]
which is just a smooth approximation of the characteristic function $1_{[s_0,s_1]}$. By using the formula above, we get
\[
0\le \frac{1}{2\,\varepsilon}\,\int_{s_0-\varepsilon}^{s_0+\varepsilon} P(\Omega_\tau)\,d\tau-\frac{1}{2\,\varepsilon}\,\int_{s_1-\varepsilon}^{s_1+\varepsilon} P(\Omega_\tau)\,d\tau.
\]
As a consequence of Coarea Formula, the function $t\mapsto P(\Omega_t)$ is $L^1$. Thus by taking $s_0$ and $s_1$ to be two Lebesgue points of this function, we can pass to the limit  as $\varepsilon$ goes to $0$ and get
\[
P(\Omega_{s_1})\le P(\Omega_{s_0}).
\]
The fact that $P(\Omega_t)\to P(\Omega)$ as $t$ goes to $0$, follows by the Area Formula and the smoothness of the boundary.
\par
We recall that in dimension $N=2$, condition \eqref{distance} implies that $\Omega$ has to be convex (see \cite[Theorem 2]{AK}), but for $N\ge 3$ this is a more general condition (see \cite[Section 5]{AK}).
\end{oss}

\section{Buser's inequality for convex sets}
\label{sec:consequence}

This was the original motivation of the present paper.
\begin{coro}[Buser's inequality for the $p-$Laplacian]
Let $1<p<+\infty$ and let $\Omega\subset \mathbb{R}^N$ be an open convex set. Then we have
\begin{equation}
\label{buser}
\lambda_p(\Omega)\le \left(\frac{\pi_p}{2}\right)^p\,\Big(h_1(\Omega)\Big)^p,
\end{equation}
and the inequality is strict on bounded convex sets. Moreover, the constant $(\pi_p/2)^p$ is sharp and the equality sign is attained by a {\rm slab}, i.e. any set of the form
\[
\{x\in\mathbb{R}^N\, :\, a<\langle x,\omega\rangle<b\},\qquad \mbox{ for some } a,b\in\mathbb{R} \mbox{ and }\omega\in\mathbb{S}^{N-1}.
\]
\end{coro}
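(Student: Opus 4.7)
My plan is to adapt the three-step scheme sketched in the remark following \eqref{reverse_parinz}, using the Main Theorem (at $q=p$) in place of P\'olya's inequality, and then to verify sharpness on a slab.

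For the bounded case, let $\Omega$ be bounded, open, convex. I would invoke the classical fact that $\Omega$ admits a Cheeger set $E_\Omega \subset \Omega$ which can itself be chosen open, bounded, and convex, with $P(E_\Omega)/|E_\Omega| = h_1(\Omega)$; this is precisely the convexity of Cheeger sets already used in the $p=2$ argument recalled in the remark. Monotonicity of $\lambda_p$ under set inclusion is immediate by extension of test functions by zero, so $\lambda_p(\Omega) \le \lambda_p(E_\Omega)$; applying the Main Theorem to the bounded open convex set $E_\Omega$ at $q=p$ then gives
\[
\lambda_p(\Omega) \le \lambda_p(E_\Omega) < \left(\frac{\pi_p}{2}\right)^p \left(\frac{P(E_\Omega)}{|E_\Omega|}\right)^p = \left(\frac{\pi_p}{2}\right)^p h_1(\Omega)^p,
\]
with strictness inherited from the Main Theorem.

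For the unbounded case, I would exhaust by bounded convex sets $\Omega_n := \Omega \cap B_n(x_0)$, with any fixed $x_0 \in \Omega$. Density of compactly supported test functions in $\mathcal{D}^{1,p}_0(\Omega)$ yields $\lambda_p(\Omega_n) \to \lambda_p(\Omega)$, and the fact that the infimum defining $h_1(\Omega)$ ranges over \emph{bounded} $E \subset \Omega$, each eventually contained in some $\Omega_n$, yields $h_1(\Omega_n) \to h_1(\Omega)$. Passing to the limit in the bounded-case inequality gives \eqref{buser}, now with the possibility of equality.

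For sharpness, up to isometry one reduces to the slab $S = \mathbb{R}^{N-1}\times(0,L)$. Separation of variables combined with \eqref{1dim} (via a standard cut-off of the extremal one-dimensional profile in the transverse variables $x'$) gives $\lambda_p(S) = (\pi_p/L)^p$. For the Cheeger constant, testing with cylinders $\{|x'|<R\}\times(0,L)$ gives $h_1(S) \le 2/L + (N-1)/R \to 2/L$, and the matching lower bound follows by applying the generalized Gauss--Green formula to the vector field $v(x) = (0,\dots,0,\, 2x_N/L - 1)$, which satisfies $|v|\le 1$ on $S$ and $\operatorname{div} v \equiv 2/L$: for any bounded $E\subset S$ of finite perimeter, $(2/L)|E| = \int_E \operatorname{div} v\,dx \le P(E)$. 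Hence $h_1(S) = 2/L$ and both sides of \eqref{buser} coincide with $(\pi_p/L)^p$. I expect the main non-routine points to be the convexity of the Cheeger set for bounded convex $\Omega$ and the transverse cut-off argument for $\lambda_p(S)$; both are classical, and the Main Theorem carries the analytic core of the argument.
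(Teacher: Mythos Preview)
Your proof is correct and follows essentially the same three-step scheme as the paper: the bounded case via convexity of the Cheeger set plus the Main Theorem, the unbounded case by exhaustion with $\Omega\cap B_n$, and sharpness on the slab. The one noteworthy difference is in how equality on the slab is verified. You compute $h_1(S)=2/L$ \emph{exactly}, supplying the lower bound by the calibration $v=(0,\dots,0,\,2x_N/L-1)$ and Gauss--Green; the paper instead only proves the upper bound $h_1(S)\le 2$ (for $L=1$) and then closes the loop by feeding $\lambda_p(S)=\pi_p^p$ and the already-established Buser inequality into the chain $\pi_p^p=\lambda_p(S)\le(\pi_p/2)^p h_1(S)^p\le \pi_p^p$, which forces equality throughout. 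Your calibration argument is more direct and self-contained; the paper's sandwich is slightly slicker in that it avoids computing $h_1(S)$ from below at all.
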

\begin{proof}
We first prove \eqref{buser} for bounded sets, then we extend it to general sets.
\vskip.2cm\noindent 
{\bf Bounded convex sets}. Let $E_\Omega\subset \Omega$ be a Cheeger set for $\Omega$. By \cite[Theorem 1]{AC}, this is convex as well. By monotonicity of $\lambda_p$, we have
\[
\lambda_p(\Omega)\le \lambda_p(E_\Omega).
\]
By applying the Main Theorem and using that $E_\Omega$ is a Cheeger set, we obtain
\[
\lambda_p(\Omega)\le \lambda_p(E_\Omega)<\left(\frac{\pi_p}{2}\right)^p\,\left(\frac{P(E_\Omega)}{|E_\Omega|}\right)^p=\left(\frac{\pi_p}{2}\right)^p\,\Big(h_1(\Omega)\Big)^p,
\]
as desired. 
\vskip.2cm\noindent
{\bf General convex sets.} For every $R>0$ sufficiently large, we set $\Omega_R=\Omega\cap B_R(0)$. From the previous step we have
\[
\lambda_p(\Omega)\le \lambda_p(\Omega_R)\le \left(\frac{\pi_p}{2}\right)^p\,\Big(h_1(\Omega_R)\Big)^p.
\] 
We used again that $\lambda_p$ is monotone decreasing with respect to set inclusion. We now fix $\varepsilon>0$ and take a bounded set $E\subset \Omega$ such that $|E|>0$ and 
\[
\frac{P(E)}{|E|}<h_1(\Omega)+\varepsilon.
\]
By taking $R>0$ large enough, we have $E\subset \Omega_R$ as well, thus in particular
\[
h_1(\Omega_R)\le \frac{P(E)}{|E|}<h_1(\Omega)+\varepsilon.
\]
This implies 
\[
\lambda_p(\Omega)< \left(\frac{\pi_p}{2}\right)^p\,\Big(h_1(\Omega)+\varepsilon\Big)^p,
\]
and thus the conclusion, by the arbitrariness of $\varepsilon>0$. In order to show that we have equality for slabs, we take the set
\[
\Omega=\mathbb{R}^{N-1}\times(0,1),
\]
and again the sequence
\[
\Omega_L=\left(-\frac{L}{2},\frac{L}{2}\right)^{N-1}\times(0,1).
\]
By using $\Omega_L$ as an admissible set for $h_1(\Omega)$ and recalling \eqref{rapportino}, we have
\[
\Big(h_1(\mathbb{R}^{N-1}\times(0,1))\Big)^p\le \lim_{L\to+\infty}\left(\frac{P(\Omega_L)}{|\Omega_L|}\right)^p= 2^p.
\]
On the other hand, by Lemma \ref{lm:slab} we have
\[
\lambda_p(\mathbb{R}^{N-1}\times(0,1))=\Big(\pi_p\Big)^p.
\]
We thus obtain
\[
\Big(\pi_p\Big)^p=\lambda_p(\mathbb{R}^{N-1}\times(0,1))\le \left(\frac{\pi_p}{2}\right)^p\,\Big(h_1(\mathbb{R}^{N-1}\times(0,1))\Big)^p\le 2^p\,\Big(\frac{\pi_p}{2}\Big)^p,
\]
which gives the desired result.
\end{proof}
\begin{oss}
The previous result generalizes \cite[Proposition 4.1]{Pa} to $p\not =2$ and to every dimension $N\ge 2$.
\end{oss}
\begin{oss}
We do not claim that convexity is necessary for Theorem \ref{buser} to hold. However, it is easy to see that inequality \eqref{buser} can not hold for general sets. This is due to the fact that if we remove from $\Omega$ a set with zero $N-$dimensional Lebesgue measure and positive $p-$capacity (for example, a compact Lipschitz $(N-1)-$dimensional surface),  $h_1$ is unchanged while $\lambda_p$ increases. By iterating this construction, we can easily produce a sequence $\{\Omega_n\}_{n\in\mathbb{N}}\subset\mathbb{R}^N$ of open bounded sets, such that
\[
\lim_{n\to\infty} \lambda_p(\Omega_n)=+\infty\qquad \mbox{ and }\qquad h_1(\Omega_n)\le C.
\]
\end{oss}

\section{A closer look at the case $q>p$}
\label{sec:pq}

In this section, we will show that for $q>p$ the estimate of the Main Theorem is not optimal. Indeed, we are going to prove the appearance of a weird phenomenon: for $q>p$, the scale invariant quantity
\[
\lambda_{p,q}(\Omega)\, \left(\frac{|\Omega|^{1-\frac{1}{p}+\frac{1}{q}}}{P(\Omega)}\right)^p,
\]
admits a maximizer in the class of open bounded convex sets.
\vskip.2cm
\begin{defi}
Let $\{\Omega_n\}_{n\in\mathbb{N}}\subset\mathbb{R}^N$ and $\Omega\subset\mathbb{R}^N$ be open sets, such that
\[
\Omega\subset \overline{B_R(0)}\qquad \mbox{ and }\qquad \Omega_n\subset \overline{B_{R}(0)}, \quad \mbox{ for every } n\in\mathbb{N},
\]
for some $R>0$. We say that $\{\Omega_n\}_{n\in\mathbb{N}}$ {\it converges to $\Omega$ with respect to the Hausdorff complementary metric} if
\[
\lim_{n\to\infty}d_H(\overline{B_R(0)}\setminus\Omega_n,\overline{B_R(0)}\setminus\Omega)=0,
\]
where $d_H$ is the Hausdorff distance. We use the notation $\Omega_n\stackrel{H}{\longrightarrow} \Omega$.
\end{defi}
We start with a simple technical result. Its proof is standard routine, we include it for completeness.
\begin{lm}
\label{lm:usc}
Let $1<p<+\infty$ and 
\[
\left\{\begin{array}{cc}
1\le q<p^*,& \mbox{ if } p\le N,\\
&\\
1\le q\le +\infty, & \mbox{ if }p>N.
\end{array}
\right.
\] 
Let $\{\Omega_n\}_{n\in\mathbb{N}}\subset \mathbb{R}^N$ be a sequence of open sets, such that 
\[
\Omega_n\subset \overline{B_{R}(0)}, \quad \mbox{ for every } n\in\mathbb{N}\qquad \mbox{ and }\qquad \Omega_n\stackrel{H}{\longrightarrow} \Omega,
\]
where $\Omega\subset\mathbb{R}^N$ is an open bounded set.
Then we have
\[
\limsup_{n\to\infty} \lambda_{p,q}(\Omega_n)\le \lambda_{p,q}(\Omega).
\]
\end{lm}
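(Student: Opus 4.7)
The plan is to prove this upper semicontinuity result via the standard strategy of constructing, for any fixed test function admissible on $\Omega$, a test function admissible on $\Omega_n$ for all $n$ sufficiently large, built from exactly the same function. The only technical fact we need about Hausdorff complementary convergence is that it transfers compact containment: if $K \subset \Omega$ is compact, then $K \subset \Omega_n$ for $n$ large enough.

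First I would fix an arbitrary $u \in C^\infty_0(\Omega)$ with $u \not\equiv 0$ and set $K = \mathrm{supp}(u)$, a compact subset of $\Omega$. Since $\Omega$ is bounded and $K \subset \Omega \subset \overline{B_R(0)}$, we have $K \cap (\overline{B_R(0)} \setminus \Omega) = \emptyset$, so the positive distance
\[
d := \mathrm{dist}\bigl(K,\,\overline{B_R(0)} \setminus \Omega\bigr) > 0.
\]
By the definition of Hausdorff complementary convergence, for $n$ large enough we have
\[
d_H\bigl(\overline{B_R(0)} \setminus \Omega_n,\, \overline{B_R(0)} \setminus \Omega\bigr) < \frac{d}{2}.
\]
This forces $\overline{B_R(0)} \setminus \Omega_n$ to be contained in a $(d/2)$-neighborhood of $\overline{B_R(0)} \setminus \Omega$, which is disjoint from $K$. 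Hence $K \subset \Omega_n$ for all such $n$, and $u \in C^\infty_0(\Omega_n)$ becomes an admissible competitor for $\lambda_{p,q}(\Omega_n)$.

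Next I would use $u$ as a test function in the Rayleigh quotient defining $\lambda_{p,q}(\Omega_n)$. Since $u$ and $\nabla u$ both vanish outside $K \subset \Omega_n \cap \Omega$, the integrals computed on $\Omega_n$ and on $\Omega$ coincide, so
\[
\lambda_{p,q}(\Omega_n) \leq \frac{\displaystyle\int_{\Omega_n} |\nabla u|^p\,dx}{\displaystyle\left(\int_{\Omega_n} |u|^q\,dx\right)^{p/q}} = \frac{\displaystyle\int_\Omega |\nabla u|^p\,dx}{\displaystyle\left(\int_\Omega |u|^q\,dx\right)^{p/q}}.
\]
Passing to the $\limsup$ in $n$ yields
\[
\limsup_{n\to\infty} \lambda_{p,q}(\Omega_n) \leq \frac{\int_\Omega |\nabla u|^p\,dx}{\left(\int_\Omega |u|^q\,dx\right)^{p/q}}.
\]
Finally, taking the infimum over $u \in C^\infty_0(\Omega) \setminus\{0\}$ gives $\limsup_{n} \lambda_{p,q}(\Omega_n) \leq \lambda_{p,q}(\Omega)$, as required.

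I do not expect any serious obstacle here: the case $q = +\infty$ proceeds identically, simply replacing the $L^q$ norm by the sup norm (which, again, is unchanged when the integration domain contains $K$). The only delicate point is the compact-containment property, but this is an immediate consequence of the definition of Hausdorff distance and the positivity of $d$. The proof uses nothing about convexity of $\Omega_n$ or $\Omega$, and nothing about the specific value of $q$ beyond the range of admissible exponents.
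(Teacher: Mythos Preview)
Your proof is correct and follows essentially the same route as the paper's: fix a test function $\varphi\in C^\infty_0(\Omega)$, observe it is admissible for $\Omega_n$ once $n$ is large, compare Rayleigh quotients, and take the infimum. The only difference is that the paper invokes \cite[Proposition~2.2.15]{HP} for the compact-containment step, whereas you spell out the (short) argument directly from the definition of Hausdorff complementary convergence.
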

\begin{proof}
Let $\varphi\in C^\infty_0(\Omega)\setminus\{0\}$, by \cite[Proposition 2.2.15]{HP} we have that $\varphi\in C^\infty_0(\Omega_n)$ for $n$ large enough. This implies
\[
\limsup_{n\to\infty}\lambda_{p,q}(\Omega_n)\le \limsup_{n\to\infty} \frac{\displaystyle\int_{\Omega_n}|\nabla \varphi|^p\,dx}{\displaystyle\left(\int_{\Omega_n} |\varphi|^q\,dx\right)^\frac{p}{q}}=\frac{\displaystyle\int_{\Omega}|\nabla \varphi|^p\,dx}{\displaystyle\left(\int_{\Omega} |\varphi|^q\,dx\right)^\frac{p}{q}}.
\]
By arbitrariness of $\varphi\in C^\infty_0(\Omega)$, we get the desired conclusion.
\end{proof}

\begin{teo}
\label{teo:existence}
Let $1<p<+\infty$ and 
\[
\left\{\begin{array}{cc}
p<q<p^*,& \mbox{ if } p\le N,\\
&\\
p< q\le +\infty, & \mbox{ if }p>N.
\end{array}
\right.
\] 
Then the shape optimization problem
\[
\sup\left\{\left(\frac{|\Omega|^{1-\frac{1}{p}+\frac{1}{q}}}{P(\Omega)}\right)^p\,\lambda_{p,q}(\Omega)\, :\, \Omega\subset\mathbb{R}^N \mbox{ open bounded convex set}\right\},
\]
admits a solution $\Omega^*$. In particular, we have the following scale invariant estimate
\[
\left(\frac{|\Omega|^{1-\frac{1}{p}+\frac{1}{q}}}{P(\Omega)}\right)^p\,\lambda_{p,q}(\Omega)\le \left(\frac{|\Omega^*|^{1-\frac{1}{p}+\frac{1}{q}}}{P(\Omega^*)}\right)^p\,\lambda_{p,q}(\Omega^*),
\]
for every $\Omega\subset\mathbb{R}^N$ open bounded convex set.
\end{teo}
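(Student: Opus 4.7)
The plan is the direct method of the calculus of variations. Let $\{\Omega_n\}_{n\in\mathbb{N}}$ be a maximizing sequence for the scale-invariant functional
\[
F(\Omega):=\left(\frac{|\Omega|^{1-\frac{1}{p}+\frac{1}{q}}}{P(\Omega)}\right)^p\,\lambda_{p,q}(\Omega).
\]
The Main Theorem gives $F(\Omega_n)<(\pi_{p,q}/2)^p$ for every $n$, and $F$ is clearly strictly positive on any ball, so the supremum $F^*:=\sup F$ is a positive finite number. By scale invariance I can normalize $\mathrm{diam}(\Omega_n)=1$, and by translation I can assume $\Omega_n\subset\overline{B_1(0)}$ for every $n$. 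The aim is to extract a Hausdorff-convergent subsequence whose limit is a full-dimensional convex set realizing $F^*$.

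The crucial point --- and the only one where the hypothesis $q>p$ is genuinely used --- is ruling out volume collapse along $\{\Omega_n\}$. I would establish the quantitative decay estimate
\[
F(\Omega)\le C\,|\Omega|^{(N-1)(1-p/q)}
\]
for every bounded convex $\Omega\subset\mathbb{R}^N$ with $\mathrm{diam}(\Omega)\le 1$, with $C=C(N,p,q)>0$. Since the exponent $(N-1)(1-p/q)$ is strictly positive for $q>p$ and $N\ge 2$, this rules out $|\Omega_n|\to 0$: otherwise $F(\Omega_n)\to 0$, contradicting $F^*>0$. The decay estimate combines three ingredients. First, the monotonicity $\lambda_{p,q}(\Omega)\le \lambda_{p,q}(B_{R_\Omega})$ together with the scaling law $\lambda_{p,q}(B_r)=\lambda_{p,q}(B_1)\,r^{-(p-N+Np/q)}$; note that $p-N+Np/q>0$ precisely in the range $q<p^*$ granted by the hypothesis, which is what makes this bound useful when $R_\Omega$ is small. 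Second, the convex-geometric estimate
\[
R_\Omega\ge c_N\,\frac{|\Omega|}{\mathrm{diam}(\Omega)^{N-1}},
\]
which I would borrow from Appendix B. Third, the elementary lower bound $P(\Omega)\ge c_N\,\mathrm{diam}(\Omega)^{N-1}$, which for $\mathrm{diam}(\Omega)=1$ simply reads $P(\Omega)\ge c_N$. A routine exponent count then delivers exactly the exponent $(N-1)(1-p/q)$.

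Once $|\Omega_n|\ge c>0$ is established, the rest is standard shape optimization. Blaschke's selection theorem extracts a subsequence, still denoted $\{\Omega_n\}$, converging in the Hausdorff complementary metric to a convex set $\Omega^*\subset\overline{B_1(0)}$. Continuity of volume and perimeter on convex bodies under Hausdorff convergence yields $|\Omega_n|\to|\Omega^*|\ge c>0$ and $P(\Omega_n)\to P(\Omega^*)$, so $\Omega^*$ is in particular full-dimensional. Lemma~\ref{lm:usc} provides the upper semicontinuity $\limsup_n\lambda_{p,q}(\Omega_n)\le \lambda_{p,q}(\Omega^*)$. Putting the three limits together,
\[
F(\Omega^*)\ge \limsup_{n\to\infty}F(\Omega_n)=F^*,
\]
so $\Omega^*$ is a maximizer, and the scale-invariant inequality in the statement of the theorem is an immediate consequence.

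The main obstacle I anticipate is the convex-geometric lower bound on $R_\Omega$ in terms of $|\Omega|$ and $\mathrm{diam}(\Omega)$: this is where convexity does all the heavy lifting, and without it the sliver-like degeneracies witnessed by the slabs $\Omega_L$ would not be tempered. The proof should rest on the classical comparison between the minimal width of a convex body and its inradius, combined with the containment of any convex body in a cylinder of axial length $\mathrm{diam}(\Omega)$ and transverse extent comparable to the minimal width. Modulo this convex-geometric input --- presumably the content of Appendix B --- the argument uses only tools already available in the paper: Blaschke selection, Hausdorff continuity of $|\cdot|$ and $P$ on convex bodies, and the upper semicontinuity of $\lambda_{p,q}$ furnished by Lemma~\ref{lm:usc}.
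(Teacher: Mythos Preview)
Your overall strategy---direct method with a diameter normalization---is sound and is a reasonable alternative to the paper's choice of normalizing $|\Omega|^{\alpha}/P(\Omega)=1$. However, your third ``elementary'' ingredient
\[
P(\Omega)\ge c_N\,\mathrm{diam}(\Omega)^{N-1}
\]
is \emph{false} for $N\ge 3$: a thin cylinder $B^{(N-1)}_\varepsilon\times(0,1)$ has diameter close to $1$ and perimeter of order $\varepsilon^{N-2}\to 0$. This single mistake propagates: the exponent you announce, $(N-1)(1-p/q)$, is too strong, and indeed the same cylinders satisfy $F(C_\varepsilon)\asymp \varepsilon^{1-p/q}$ while $|C_\varepsilon|\asymp\varepsilon^{N-1}$, so $F(C_\varepsilon)\asymp |C_\varepsilon|^{(1-p/q)/(N-1)}$, which violates your claimed bound for $N\ge 3$.

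The repair is available in Appendix~B, but not via the inequality you invoke. What you actually need is Proposition~\ref{lm:geometrico}, namely $P(\Omega)\ge \gamma_N\,R_\Omega^{N-2}\,\mathrm{diam}(\Omega)$. Combining this with $|\Omega|<R_\Omega\,P(\Omega)$ from Lemma~\ref{lm:makai} and your first ingredient $\lambda_{p,q}(\Omega)\le C\,R_\Omega^{-(p-N+Np/q)}$, an exponent count (the same one you carried out, but with the correct perimeter bound) gives
\[
F(\Omega)\le C\,R_\Omega^{\,1-p/q}\qquad(\mathrm{diam}(\Omega)=1),
\]
and then $R_\Omega\le (|\Omega|/\omega_N)^{1/N}$ yields $F(\Omega)\le C\,|\Omega|^{(1-p/q)/N}$, which is enough to rule out volume collapse. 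After this correction, the rest of your argument (Blaschke selection, continuity of $|\cdot|$ and $P$ on convex bodies, Lemma~\ref{lm:usc}) goes through as you wrote it.

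For comparison, the paper sidesteps this issue by normalizing $|\Omega_n|^{1-1/p+1/q}/P(\Omega_n)=1$: then the isoperimetric inequality immediately gives $|\Omega_n|\ge c>0$, the bound $\lambda_{p,q}(\Omega_n)\le F^*$ forces $R_{\Omega_n}$ to be bounded above, and Proposition~\ref{prop:inradius} (which packages the same convex-geometric facts) delivers a diameter bound. The condition $q>p$ enters there as $\alpha:=1-1/p+1/q<1$. Either normalization works, but note that both ultimately rest on the nontrivial estimate of Proposition~\ref{lm:geometrico}; there is no way around it with the ``elementary'' bound you proposed.
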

\begin{proof}
We use the Direct Methods in the Calculus of Variations. We call $\lambda^*_{p,q}$ the supremum above, then this is not $+\infty$. Indeed, by the Main Theorem we have
\[
\left(\frac{|\Omega|^{1-\frac{1}{p}+\frac{1}{q}}}{P(\Omega)}\right)^p\,\lambda_{p,q}(\Omega)<\left(\frac{\pi_{p,q}}{2}\right)^p,
\]
for every admissible convex set $\Omega$. Of course, we also have $\lambda^*_{p,q}>0$. We then take a sequence of admissible sets $\{\Omega_n\}_{n\in\mathbb{N}}$ such that
\[
\left(\frac{|\Omega_n|^{1-\frac{1}{p}+\frac{1}{q}}}{P(\Omega_n)}\right)^p\,\lambda_{p,q}(\Omega_n)>\frac{n+1}{n+2}\,\lambda^*_{p,q},\qquad \mbox{ for every }n\in\mathbb{N}.
\] 
As the functional we are optimizing is scale invariant, we can assume without loss of generality that
\[
\frac{|\Omega_n|^{1-\frac{1}{p}+\frac{1}{q}}}{P(\Omega_n)}=1,\qquad \mbox{ for every }n\in\mathbb{N}.
\]
Observe that this is possible, also thanks to the fact that 
\[
1-\frac{1}{p}+\frac{1}{q}>\frac{N-1}{N}.
\]
Then by using the {\it isoperimetric inequality}, we get
\[
1=\frac{|\Omega_n|^{1-\frac{1}{p}+\frac{1}{q}}}{P(\Omega_n)}\le \frac{1}{N\,\omega_N^{1/N}}\,|\Omega_n|^{\frac{1}{q}-\frac{1}p+\frac{1}{N}}.
\]
The assumption on $q$ entails that the last exponent is positive, thus from the previous estimate we get the uniform lower bound
\begin{equation}
\label{dalbasso}
|\Omega_n|\ge (N\,\omega_N)^\frac{1}{\frac{1}{N}-\frac{1}{p}+\frac{1}{q}}>0\,\qquad \mbox{ for every }n\in\mathbb{N}.
\end{equation} 
We now observe that by the monotonicity and scaling properties of $\lambda_{p,q}$, we get
\[
\frac{n+1}{n+2}\,\lambda^*_{p,q}<\lambda_{p,q}(\Omega_n)\le \lambda_{p,q}(B_1(0))\, R_{\Omega_n}^{N-p-\frac{p}{q}\,N}.
\]
By observing that the last exponent is negative, we get that $R_{\Omega_n}$ is uniformly bounded from above. We can now apply inequality \eqref{good} of Proposition \ref{prop:inradius} with\footnote{It is precisely here that the hypothesis $q>p$ is needed: indeed
\[
\alpha<1\qquad \Longleftrightarrow \qquad q>p.
\]}
\[
\alpha=1-\frac{1}{p}+\frac{1}{q},
\]
and obtain that
\begin{equation}
\label{diametri}
\mathrm{diam}(\Omega_n)\le C,\qquad \mbox{ for every } n\in\mathbb{N}.
\end{equation} 
This property implies that we can assume 
\[
\Omega_n\subset \overline{B_R(0)},\qquad \mbox{ for every }n\in\mathbb{N},
\]
up to a translation and up to take $R>0$ large enough. 
\par
We can now use the Blaschke Selection Theorem (see \cite[Theorem 1.8.7]{Sch}), so to get that $\{\Omega_n\}_{n\in\mathbb{N}}$ converges
 (up to subsequences) with respect to the Hausdorff complementary metric to a limit open set $\Omega^*\subset\overline{B_R(0)}$, which is still convex. By observing that $\lambda_{p,q}$ is upper semicontinuous (see Lemma \ref{lm:usc}), we have
\[
\lambda^*_{p,q}\le \limsup_{n\to\infty} \lambda_{p,q}(\Omega_n)\le\lambda_{p,q}(\Omega^*). 
\]
Moreover, the uniform bound \eqref{diametri} and the monotonicity of the perimeter with respect to inclusion for convex sets, implies that $\{1_{\Omega_n}\}_{n\in\mathbb{N}}$ is a bounded sequence in the space of functions with bounded variation $BV(\mathbb{R}^N)$. Thus we get
\[
\lim_{n\to\infty} \|1_{\Omega_n}-1_{\Omega^*}\|_{L^1(\mathbb{R}^N)}=0\qquad \mbox{ and }\qquad P(\Omega^*)\le \liminf_{n\to\infty} P(\Omega_n).
\]
The two properties and \eqref{dalbasso} imply that
\[
|\Omega^*|>0\qquad \mbox{ and }\qquad \frac{|\Omega^*|^{1-\frac{1}{p}+\frac{1}{q}}}{P(\Omega^*)}\ge 1,
\]
thus we have
\[
\lambda^*_{p,q}\le \lambda_{p,q}(\Omega^*)\le \left(\frac{|\Omega^*|^{1-\frac{1}{p}+\frac{1}{q}}}{P(\Omega^*)}\right)^p\,\lambda_{p,q}(\Omega^*)\le \lambda^*_{p,q}.
\] 
Thus $\Omega^*$ is the desired maximizer.
\end{proof}
We now discuss the behaviour of the scale invariant quantity
\[
\lambda_{p,q}(\Omega)\, \left(\frac{|\Omega|^{1-\frac{1}{p}+\frac{1}{q}}}{P(\Omega)}\right)^p
\] 
as $q\nearrow p^*$, when $p\le N$.
\begin{oss}[Limit case]
\label{oss:palle}
In the subconformal case $p<N$, if we define the sharp Sobolev constant
\[
S_{N,p}=\inf_{u\in C^\infty_0(\mathbb{R}^N)\setminus\{0\}} \frac{\displaystyle\int_{\mathbb{R}^N} |\nabla u|^p\,dx}{\displaystyle\left(\int_{\mathbb{R}^N} |u|^\frac{N\,p}{N-p}\,dx\right)^\frac{N}{N-p}},
\]
we have
\[
\lim_{q\nearrow p^*}\lambda_{p,q}(\Omega)=S_{N,p}\qquad \mbox{ and }\qquad \lim_{q\nearrow p^*}\frac{|\Omega|^{1-\frac{1}{p}+\frac{1}{q}}}{P(\Omega)}=\frac{|\Omega|^\frac{N-1}{N}}{P(\Omega)},
\]
and the latter is the classical isoperimetric ratio. Thus we get
\[
\lim_{q\nearrow p^*}\lambda_{p,q}(\Omega)\, \left(\frac{|\Omega|^{1-\frac{1}{p}+\frac{1}{q}}}{P(\Omega)}\right)^p=S_{N,p}\,\left(\frac{|\Omega|^\frac{N-1}{N}}{P(\Omega)}\right)^p,
\]
and the unique maximizers of the last functional are the balls.
\par
The conformal case $p=N$ is slightly different: indeed, in this case we still have
\[
\lim_{q\nearrow +\infty}\frac{|\Omega|^{1-\frac{1}{N}+\frac{1}{q}}}{P(\Omega)}=\frac{|\Omega|^\frac{N-1}{N}}{P(\Omega)},
\]
but the relevant Poincar\'e-Sobolev constant now degenerates, i.e.
\[
\lim_{q\nearrow +\infty} \lambda_{N,q}(\Omega)=0,
\]
see \cite[Lemma 2.2]{RW}. More precisely, \cite[Lemma 2.2]{RW} proves that
\[
\lim_{q\nearrow +\infty} q^{N-1}\,\lambda_{N,q}(\Omega)=C_N,
\]
for a constant $C_N$ depending on $N$ only.
Thus in this case
\[
\lim_{q\nearrow +\infty} q^{N-1}\,\lambda_{N,q}(\Omega)\,\left(\frac{|\Omega|^{1-\frac{1}{N}+\frac{1}{q}}}{P(\Omega)}\right)^N=C_N\,\left(\frac{|\Omega|^\frac{N-1}{N}}{P(\Omega)}\right)^N,
\]
and the latter is again (uniquely) maximized by balls.
\end{oss}
The previous remark suggests that for $p\le N$ and $q$ close to $p^*$, optimizers should look ``round''. One could conjecture that solutions are given by balls. On the other hand, this is surely not the case for $q$ close to $p$, as shown in the following
\begin{prop}
\label{prop:palle}
Let $1<p<+\infty$, there exists $q_0>p$ such that for every $p<q\le q_0$ balls are not solutions of  
\[
\max\left\{\left(\frac{|\Omega|^{1-\frac{1}{p}+\frac{1}{q}}}{P(\Omega)}\right)^p\,\lambda_{p,q}(\Omega)\, :\, \Omega\subset\mathbb{R}^N \mbox{ open bounded convex set}\right\}.
\]
\end{prop}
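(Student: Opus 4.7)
The plan is to exhibit, for each $q$ slightly larger than $p$, an admissible convex set $\Omega$ that beats any ball in the scale invariant functional
\[
F_q(\Omega):=\left(\frac{|\Omega|^{1-\frac{1}{p}+\frac{1}{q}}}{P(\Omega)}\right)^p\,\lambda_{p,q}(\Omega).
\]
By scale invariance of $F_q$, it is enough to fix a single ball $B$.

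At the endpoint $q=p$, the strict inequality in the Main Theorem yields
\[
F_p(B)<\left(\frac{\pi_p}{2}\right)^p,
\]
while the computation performed in Part~2 of the proof of the Main Theorem (see \eqref{tuttassieme}) gives
\[
\lim_{L\to+\infty} F_p(\Omega_L)=\left(\frac{\pi_p}{2}\right)^p,\qquad \Omega_L=\left(-\frac{L}{2},\frac{L}{2}\right)^{N-1}\times (0,1).
\]
Hence, choosing $L_0>0$ sufficiently large, we obtain a convex set with $F_p(\Omega_{L_0})>F_p(B)$.

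The conclusion then follows by a continuity argument at $q=p$. By Lemma \ref{lm:continuity} the map $q\mapsto \lambda_{p,q}(\Omega)$ is right continuous at $p$ for every open set, and since $|\Omega|$ and $P(\Omega)$ are independent of $q$ while the exponent $1-1/p+1/q$ varies continuously, the map $q\mapsto F_q(\Omega)$ is itself right continuous at $p$ on every fixed bounded open set. Applied to the two concrete sets $\Omega_{L_0}$ and $B$, this implies that the strict inequality $F_p(\Omega_{L_0})>F_p(B)$ propagates to a right neighborhood: there exists $q_0>p$ with $F_q(\Omega_{L_0})>F_q(B)$ for every $p<q\le q_0$, so that balls are disqualified from the supremum in that range.

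No step is really problematic here: the whole argument is a direct consequence of the strict inequality in the Main Theorem together with the asymptotic saturation by slabs, combined with right continuity of $\lambda_{p,q}$ in $q$. The only point one has to be a bit careful about is making sure that the constant $L_0$ is selected \emph{before} invoking the continuity in $q$, so that the same competitor $\Omega_{L_0}$ works uniformly on a whole right neighborhood of $p$.
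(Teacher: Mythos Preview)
Your proof is correct and follows essentially the same route as the paper's: pick a long slab $\Omega_{L_0}$ that beats the ball at $q=p$ (using the strict inequality of the Main Theorem and the saturation \eqref{tuttassieme}), then pass to a right neighborhood of $p$ by continuity. The only cosmetic difference is that the paper handles the ball side via the monotonicity of Lemma~\ref{lm:monotone} (which gives $F_q(B)\le F_p(B)$ for \emph{all} $q>p$, not just nearby ones), while you invoke right continuity of $F_q$ on both $\Omega_{L_0}$ and $B$; either choice works and the argument is the same in substance.
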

\begin{proof}
We use a continuity argument, aiming at proving that a set of the form 
\[
\Omega_L=\left(-\frac{L}{2},\frac{L}{2}\right)^{N-1}\times (0,1),
\]
gives a higher value than a ball.
We set for simplicity $B=B_1(0)$, then we take
\[
\varepsilon=\frac{1}{2}\,\left(\left(\frac{\pi_p}{2}\right)^p-\left(\frac{|B|}{P(B)}\right)^p\,\lambda_p(B)\right)>0.
\]
Observe that this is positive, since $B$ can not attain the equality in \eqref{polyapq}. 
We recall that by \eqref{tuttassieme}
\[
\lim_{L\to+\infty} \left(\frac{|\Omega_L|}{P(\Omega_L)}\right)^p\,\lambda_p(\Omega_L)=\left(\frac{\pi_p}{2}\right)^p.
\]
Then there exists $L_\varepsilon>0$ such that 
\[
\left(\frac{|\Omega_{L_\varepsilon}|}{P(\Omega_{L_\varepsilon})}\right)^p\,\lambda_p(\Omega_{L_\varepsilon})>\left(\frac{\pi_p}{2}\right)^p-\varepsilon.
\]
By Lemma \ref{lm:continuity} the quantity $\lambda_{p,q}(\Omega)$ is right continuous in $q$, thus
\[
\lim_{q\searrow p} \left(\frac{|\Omega_{L_\varepsilon}|^{1-\frac{1}{p}+\frac{1}{q}}}{P(\Omega_{L_\varepsilon})}\right)^p\,\lambda_{p,q}(\Omega_{L_\varepsilon})=\left(\frac{|\Omega_{L_\varepsilon}|}{P(\Omega_{L_\varepsilon})}\right)^p\,\lambda_p(\Omega_{L_\varepsilon}).
\]
This implies that there exists $q_\varepsilon>p$ such that for every $p<q\le q_\varepsilon$
\[
\left(\frac{|\Omega_{L_\varepsilon}|^{1-\frac{1}{p}+\frac{1}{q}}}{P(\Omega_{L_\varepsilon})}\right)^p\,\lambda_{p,q}(\Omega_{L_\varepsilon})>\left(\frac{|\Omega_{L_\varepsilon}|}{P(\Omega_{L_\varepsilon})}\right)^p\,\lambda_p(\Omega_{L_\varepsilon})-\varepsilon>\left(\frac{\pi_p}{2}\right)^p-2\,\varepsilon.
\]
We now observe that by Lemma \ref{lm:monotone} we have
\[
\left(\frac{|B|}{P(B)}\right)^p\,\lambda_p(B)\ge \left(\frac{|B|^{1-\frac{1}{p}+\frac{1}{q}}}{P(B)}\right)^p\,\lambda_{p,q}(B),
\]
for every $q>p$.
In conclusion, by recalling the definition of $\varepsilon$ and using the previous estimate, we get
\[
\left(\frac{|\Omega_{L_\varepsilon}|^{1-\frac{1}{p}+\frac{1}{q}}}{P(\Omega_{L_\varepsilon})}\right)^p\,\lambda_{p,q}(\Omega_{L_\varepsilon})>\left(\frac{|B|^{1-\frac{1}{p}+\frac{1}{q}}}{P(B)}\right)^p\,\lambda_{p,q}(B).
\]
This concludes the proof.
\end{proof}

\appendix 

\section{One-dimensional Poincar\'e constants}
\label{sec:A}
In the proof of the Main Theorem, we used the following simple result.
\begin{lm}
\label{lm:1D}
Let $1<p<+\infty$ and $1\le q\le +\infty$, we define
\[
A_{p,q}=\min_{u\in W^{1,p}((0,1))\setminus\{0\}}\left\{\frac{\displaystyle\int_0^1|u'|^p\,dt}{\displaystyle\left(\int_0^1|u|^q\,dt\right)^\frac{p}{q}}\, :\, u(0)=0\, \mbox{ or }\,u(1)=0\right\}
\]
then we have
\[
A_{p,q}=\left(\frac{\pi_{p,q}}{2}\right)^p.
\]
\end{lm}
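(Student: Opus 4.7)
The plan is to relate the mixed one-dimensional problem defining $A_{p,q}$ (Dirichlet at one endpoint only) to the fully Dirichlet problem on $(0,2)$ via even reflection across the non-Dirichlet endpoint. Recall that by \eqref{1dim} with $L=2$, the minimum of $\|u'\|_{L^p((0,2))}^p/\|u\|_{L^q((0,2))}^p$ over $W^{1,p}_0((0,2))$ equals $\pi_{p,q}^p/2^{p-1+p/q}$. The two implicit scaling factors $2$ and $2^{p/q}$ that appear when one doubles an interval combine with this exponent to produce exactly $(\pi_{p,q}/2)^p$.

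For the inequality $A_{p,q} \ge (\pi_{p,q}/2)^p$, I would take any competitor $u\in W^{1,p}((0,1))$ with, say, $u(0)=0$ (the case $u(1)=0$ is symmetric), and define $\tilde u$ on $(0,2)$ by $\tilde u(t)=u(t)$ for $t\in[0,1]$ and $\tilde u(t)=u(2-t)$ for $t\in[1,2]$. Then $\tilde u\in W^{1,p}_0((0,2))$ with
\[
\int_0^2 |\tilde u'|^p\,dt=2\int_0^1 |u'|^p\,dt,\qquad \int_0^2 |\tilde u|^q\,dt=2\int_0^1 |u|^q\,dt
\]
(interpreted as $\|\tilde u\|_{L^\infty}=\|u\|_{L^\infty}$ when $q=+\infty$). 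Inserting $\tilde u$ into the bound from \eqref{1dim} with $L=2$ and simplifying the factors of $2$ yields
\[
\int_0^1 |u'|^p\,dt\;\ge\;\Big(\tfrac{\pi_{p,q}}{2}\Big)^p\,\Big(\int_0^1|u|^q\,dt\Big)^{p/q},
\]
so that $A_{p,q}\ge (\pi_{p,q}/2)^p$.

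For the reverse direction, I would exhibit an explicit competitor attaining the bound. Let $\psi$ be a minimizer for $\pi_{p,q}$ on $(0,1)$ that is symmetric about $t=1/2$; such a minimizer exists since any optimizer can be symmetrized via Schwarz (symmetric decreasing) rearrangement on the interval, which preserves the $L^q$-norm and does not increase the $L^p$-norm of the derivative. Setting $u(s)=\psi(s/2)$ for $s\in(0,1)$ produces an element of $W^{1,p}((0,1))$ with $u(0)=\psi(0)=0$, and a direct change of variables combined with the symmetry of $\psi$ gives $\int_0^1 |u'|^p\,ds=2^{-p}\int_0^1|\psi'|^p\,dt$ and $\int_0^1 |u|^q\,ds=\int_0^1|\psi|^q\,dt$, so the Rayleigh quotient of $u$ equals $(\pi_{p,q}/2)^p$. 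This proves $A_{p,q}\le (\pi_{p,q}/2)^p$ and completes the identification.

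The only delicate point is the existence of a symmetric minimizer for $\pi_{p,q}$, which is what forces the introduction of rearrangement. An equivalent, perhaps conceptually cleaner, way to bypass this is to note that every minimizer of $A_{p,q}$ extends by even reflection to a minimizer of the Dirichlet problem on $(0,2)$, so the two minima are tied by the scaling computation above without having to build the symmetric function by hand; I would mention both routes but formalize the first one.
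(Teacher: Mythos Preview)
Your proof is correct and follows essentially the same approach as the paper: even reflection across the free endpoint to pass from the one-sided problem to the full Dirichlet problem, together with symmetric decreasing rearrangement to obtain the reverse inequality. The paper reflects into $(-1/2,1/2)$ while you reflect into $(0,2)$ and invoke \eqref{1dim}, but this is purely a reparametrization.
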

\begin{proof}
We take $u$ to be optimal for $A_{p,q}$, we can assume without loss of generality that
\[
\int_0^1 |u|^q\,dt=1,\qquad u\ge 0\qquad\mbox{ and }\qquad u(1)=0.
\]
We construct the new function $U\in W^{1,p}((-1/2,1/2))$ by
\[
U(t)=\left\{\begin{array}{rl}
u(2\,t),& \mbox{ if }0\le t<1/2,\\
u(-2\,t),& \mbox{ if } -1/2<t<0,
\end{array}
\right.
\]
which is admissible for $\pi_{p,q}$. 
By observing that
\[
\int_{-\frac{1}{2}}^\frac{1}{2} |U'|^p\,dt=2^p\,\int_0^1 |u'|^p\,dt=2^p\,A_{p,q}
\]
and
\[
\left(\int_{-\frac{1}{2}}^\frac{1}{2} |U|^q\,dt\right)^\frac{p}{q}=\left(\int_0^1 |u|^q\,dt\right)^\frac{p}{q}=1,
\]
we get
\[
\Big(\pi_{p,q}\Big)^p\le 2^p\,A_{p,q}.
\]
On the other hand, we take $v\in W^{1,p}((-1/2,1/2))$ to be optimal for $\pi_{p,q}$. Again, without loss of generality we can assume
\[
\int_{-1/2}^{1/2} |v|^q\,dt=1\qquad \mbox{마nd }\qquad v\ge 0.
\]
If we indicate by $s\mapsto \mu(s)=|\{t\in(-1/2,1/2)\, :\, v(t)>s\}|$ the distribution function of $v$, we can define the {\it symmetric decreasing rearrangement} $v^*$ through
\[
\{t\in (-1/2,1/2)\, :\, v^*(t)>s\}=\left(-\frac{\mu(s)}{2},\frac{\mu(s)}{2}\right),\qquad s\ge 0.
\]
By construction, we have
\[
\int_{-\frac{1}{2}}^\frac{1}{2} |v|^q\,dt=\int_{-\frac{1}{2}}^\frac{1}{2} |v^*|^q\,dt,
\]
and by the classical {\it P\'olya-Szeg\H{o} principle}
\[
\int_{-\frac{1}{2}}^\frac{1}{2} |v'|^p\,dt\ge \int_{-\frac{1}{2}}^\frac{1}{2} |(v^*)'|^p\,dt.
\]
We also observe that by construction, we have $v^*(t)=v^*(-t)$, thus 
\[
\begin{split}
\Big(\pi_{p,q}\Big)^p=\frac{\displaystyle\int_{-\frac{1}{2}}^\frac{1}{2}|v'|^p\,dt}{\displaystyle\left(\int_{-\frac{1}{2}}^\frac{1}{2}|v|^q\,dt\right)^\frac{p}{q}}
&\ge \frac{\displaystyle\int_{-\frac{1}{2}}^\frac{1}{2}|(v^*)'|^p\,dt}{\displaystyle\left(\int_{-\frac{1}{2}}^\frac{1}{2}|v^*|^q\,dt\right)^\frac{p}{q}}\\
&=\frac{\displaystyle 2\,\int_{0}^\frac{1}{2}|(v^*)'|^p\,dt}{\displaystyle\left(2\,\int_0^\frac{1}{2}|v^*|^q\,dt\right)^\frac{p}{q}}\ge 2^p\,A_{p,q},
\end{split}
\]
where we used the change of variable $2\,t=s$ in the last estimate.
This concludes the proof.
\end{proof}
\begin{lm}[Eigenvalue of a slab]
\label{lm:slab}
Let $1<p<+\infty$, then we have
\[
\lambda_p(\mathbb{R}^{N-1}\times(0,1))=\min_{u\in W^{1,p}((0,1))\setminus\{0\}}\left\{\frac{\displaystyle\int_{0}^1|u'|^p\,dt}{\displaystyle \int_{0}^1|u|^p\,dt}\, :\, u(0)=u(1)=0\right\}=\Big(\pi_p\Big)^p.
\]
\end{lm}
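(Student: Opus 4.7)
The plan is to prove the two inequalities separately. For the lower bound $\lambda_p(\mathbb{R}^{N-1}\times(0,1))\ge (\pi_p)^p$, I would take any $u\in C^\infty_0(\mathbb{R}^{N-1}\times(0,1))$ and apply Fubini. For almost every $x'\in\mathbb{R}^{N-1}$, the slice $x_N\mapsto u(x',x_N)$ lies in $W^{1,p}_0((0,1))$, so by definition of $\pi_p$ and formula \eqref{1dim} (with $L=1$, $q=p$) we have
\[
\int_0^1 |u(x',x_N)|^p\,dx_N\le \frac{1}{(\pi_p)^p}\int_0^1 |\partial_{x_N} u(x',x_N)|^p\,dx_N\le \frac{1}{(\pi_p)^p}\int_0^1 |\nabla u(x',x_N)|^p\,dx_N.
\]
Integrating over $x'\in\mathbb{R}^{N-1}$ and taking the infimum over $u$ yields the desired bound.

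For the upper bound, I would build tensor-product test functions. Let $u_0\in W^{1,p}_0((0,1))$ be a minimizer for $\pi_p$ (normalized so $\|u_0\|_{L^p((0,1))}=1$), and approximate $u_0$ in the $W^{1,p}$-norm by functions $\psi_k\in C^\infty_0((0,1))$, so that
\[
\frac{\|\psi_k'\|_{L^p((0,1))}}{\|\psi_k\|_{L^p((0,1))}}\longrightarrow \pi_p.
\]
Fix $\psi=\psi_k$. Choose a cutoff family $\eta_R\in C^\infty_0(\mathbb{R}^{N-1})$ with $\eta_R\equiv 1$ on $B_R$, $\mathrm{supp}\,\eta_R\subset B_{2R}$ and $|\nabla\eta_R|\le C/R$. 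Then $v_R(x',x_N):=\psi(x_N)\,\eta_R(x')$ lies in $C^\infty_0(\mathbb{R}^{N-1}\times(0,1))$, hence is admissible for $\lambda_p$. Treating $\nabla v_R$ as a vector with components $\psi\,\partial_i \eta_R$ (for $i<N$) and $\psi'\,\eta_R$, the pointwise triangle inequality together with Minkowski's integral inequality gives
\[
\|\nabla v_R\|_{L^p}\le \|\psi'\|_{L^p((0,1))}\,\|\eta_R\|_{L^p(\mathbb{R}^{N-1})}+\|\psi\|_{L^p((0,1))}\,\|\nabla\eta_R\|_{L^p(\mathbb{R}^{N-1})},
\]
while $\|v_R\|_{L^p}=\|\psi\|_{L^p((0,1))}\,\|\eta_R\|_{L^p(\mathbb{R}^{N-1})}$. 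The main point is then the scaling estimate: since $\eta_R\equiv 1$ on $B_R$ we have $\|\eta_R\|_{L^p}^p\gtrsim R^{N-1}$, while $|\nabla\eta_R|$ is supported in an annulus of volume $\lesssim R^{N-1}$ with $|\nabla\eta_R|\le C/R$, so $\|\nabla\eta_R\|_{L^p}/\|\eta_R\|_{L^p}\lesssim R^{-1}\to 0$ as $R\to\infty$.

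Combining these estimates yields
\[
\lambda_p(\mathbb{R}^{N-1}\times(0,1))^{1/p}\le \frac{\|\nabla v_R\|_{L^p}}{\|v_R\|_{L^p}}\le \frac{\|\psi'\|_{L^p((0,1))}}{\|\psi\|_{L^p((0,1))}}+\frac{\|\nabla\eta_R\|_{L^p}}{\|\eta_R\|_{L^p}},
\]
and letting first $R\to\infty$ and then $k\to\infty$ (so $\psi_k\to u_0$) gives $\lambda_p(\mathbb{R}^{N-1}\times(0,1))^{1/p}\le \pi_p$. The two bounds together give the claim. The only mildly delicate step is the upper bound: ensuring that the cutoff does not spoil the Rayleigh quotient, which is why the Minkowski-type splitting and the $R^{-1}$ decay of $\|\nabla\eta_R\|_{L^p}/\|\eta_R\|_{L^p}$ in $\mathbb{R}^{N-1}$ are essential.
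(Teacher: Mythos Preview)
Your proof is correct and follows essentially the same strategy as the paper: Fubini with the one-dimensional inequality on slices for the lower bound, and tensor-product test functions $\psi(x_N)\,\eta_R(x')$ with an expanding cutoff for the upper bound. The only cosmetic difference is that the paper handles the upper bound by a direct change of variables $x'=R\,y'$ in the Rayleigh quotient and then passes to the limit, whereas you use the Minkowski/triangle inequality to split off the $\|\nabla\eta_R\|_{L^p}/\|\eta_R\|_{L^p}$ error term; both routes lead to the same conclusion.
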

\begin{proof}
We first prove the upper bound
\begin{equation}
\label{sopra1d}
\lambda_p(\mathbb{R}^{N-1}\times(0,1))\le \Big(\pi_p\Big)^p.
\end{equation}
For every $\varepsilon>0$, we take $u_\varepsilon\in C^\infty_0((0,1))$ to be an almost optimal function for the one-dimensional problem, i.e.
\[
\int_0^1 |u'_\varepsilon|^p\,dt<\Big(\pi_p\Big)^p+\varepsilon\qquad \mbox{ and }\qquad \int_0^1 |u_\varepsilon|^p\,dt=1.
\]
We take $\eta\in C^\infty_0(\mathbb{R})$  such that
\[
0\le \eta\le 1,\qquad \eta\equiv 1 \mbox{ on } \left[-\frac{1}{2},\frac{1}{2}\right],\qquad \eta\equiv 0 \mbox{ on } \mathbb{R}\setminus[-1,1],
\]
then for every $R>0$, we choose 
\[
\varphi(x',x_N)=\eta_R(|x'|)\,u_\varepsilon(x_N),\qquad \mbox{ where }\ \eta_R(t)=R^{\frac{1-N}{p}}\,\eta\left(\frac{t}{R}\right).
\]
We obtain
\[
\lambda_p(\mathbb{R}^{N-1}\times(0,1))\le \frac{\displaystyle \int_{B_{R}(0)}\int_0^1\left(\left|\nabla _{x'}\eta_R\left(|x'|\right)\right|^2\,|u_\varepsilon(x_N)|^2+|u'_\varepsilon(x_N)|^2\,\eta_R\left(|x'|\right)^2\right)^\frac{p}{2}\,dx'\,dx_N}{\displaystyle \int_{B_R(0)} \eta_R\left(|x'|\right)^p\,dx'}.
\]
We now use the definition of $\eta_R$ and the change of variables $x'=R\,y'$, so to get
\[
\begin{split}
\lambda_p(\mathbb{R}^{N-1}\times(0,1))&\le \frac{\displaystyle \int_{B_1(0)}\int_0^1 \left[R^{\frac{2}{p}\,(1-N)-2}\,\left|\eta'(|y'|)\right|^2\,|u_\varepsilon(x_N)|^2+R^{\frac{2}{p}\,(1-N)}\,|u'_\varepsilon(x_N)|^2\,|\eta(|y'|)|^2\right]^\frac{p}{2}R^{N-1}\,dy'\,dx_N}{\displaystyle\int_{B_1(0)}\,|\eta(|y'|)|^p\,dy'}\\
&=\frac{\displaystyle \int_{B_1(0)}\int_0^1 \left[\frac{1}{R^2}\,\left|\eta'(|y'|)\right|^2\,|u_\varepsilon(x_N)|^2+|u'_\varepsilon(x_N)|^2\,|\eta(|y'|)|^2\right]^\frac{p}{2}\,dy'\,dx_N}{\displaystyle\int_{B_1(0)}\,|\eta(|y'|)|^p\,dy'}.
\end{split}
\]
By taking the limit as $R$ goes to $+\infty$ and using Fubini's theorem, from the previous estimate we get
\[
\lambda_p(\mathbb{R}^{N-1}\times(0,1))\le \frac{\displaystyle \int_{B_1(0)}\int_0^1 |u'_\varepsilon(x_N)|^p\,|\eta(|y'|)|^p\,dy'\,dx_N}{\displaystyle\int_{B_1(0)}\,|\eta(|y'|)|^p\,dy'}=\int_0^1 |u'_\varepsilon|^p\,dx_N<\Big(\pi_p\Big)^p+\varepsilon.
\]
Finally, the arbitrariness of $\varepsilon>0$ implies \eqref{sopra1d}.
\vskip.2cm\noindent
We now prove the reverse inequality 
\begin{equation}
\label{sotto1d}
\lambda_p(\mathbb{R}^{N-1}\times (0,1))\ge \Big(\pi_p\Big)^p.
\end{equation}
For every $\varepsilon>0$, we take $\varphi_\varepsilon\in C^\infty_0(\mathbb{R}^{N-1}\times (0,1))\setminus\{0\}$ such that
\[
\frac{\displaystyle\int_{\mathbb{R}^{N-1}\times(0,1)} |\nabla \varphi_\varepsilon|^p\,dx}{\displaystyle\int_{\mathbb{R}^{N-1}\times(0,1)} |\varphi_\varepsilon|^p\,dx}<\lambda_p(\mathbb{R}^{N-1}\times(0,1))+\varepsilon.
\]
Observe that
\[
\begin{split}
\int_{\mathbb{R}^{N-1}\times(0,1)} |\nabla \varphi_\varepsilon|^p\,dx&\ge \int_{\mathbb{R}^{N-1}}\left(\int_0^1 |(\varphi_\varepsilon)_{x_N}|^p\,d x_N\right)\,dx'\\
&\ge \Big(\pi_p\Big)^p\,\int_{\mathbb{R}^{N-1}}\left(\int_0^1 |\varphi_\varepsilon|^p\,d x_N\right)\,dx'=\Big(\pi_p\Big)^p\,\int_{\mathbb{R}^{N-1}\times [0,1]}|\varphi_\varepsilon|^p\,dx,
\end{split}
\]
where we used that $x_N\mapsto \varphi_\varepsilon(x',x_N)$ is admissible for the one-dimensional problem, for every $x'$. We thus obtained
\[
\Big(\pi_p\Big)^p\le \lambda_p(\mathbb{R}^{N-1}\times(0,1))+\varepsilon.
\]
By arbitrariness of $\varepsilon>0$, this proves \eqref{sotto1d}.
\end{proof}

\section{Geometric estimates for convex sets}
\label{sec:B}

We start with the following technical result. 
\begin{lm}
\label{lm:makai}
Let $\Omega\subset\mathbb{R}^N$ be an open bounded convex set. Then we have
\[
\frac{R_\Omega}{N}\le \frac{|\Omega|}{P(\Omega)}< R_\Omega.
\]
Both inequalities are sharp.
\end{lm}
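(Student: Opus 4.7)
The two bounds have quite different flavors, so I would treat them separately. The lower bound $R_\Omega/N \le |\Omega|/P(\Omega)$ is a classical consequence of the divergence theorem together with convexity; the strict upper bound $|\Omega|/P(\Omega) < R_\Omega$ is a consequence of the Coarea formula applied to $\delta_\Omega$, together with the monotonicity of perimeters of the interior parallel sets that was already used in the proof of the Main Theorem.

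\textbf{Lower bound.} Let $x_0\in\Omega$ be the center of a ball $B_{R_\Omega}(x_0)\subset\Omega$ realizing the inradius. The plan is to apply the divergence theorem to the vector field $x\mapsto x-x_0$, which gives
\[
|\Omega|=\frac{1}{N}\,\int_{\partial\Omega}\langle x-x_0,\nu(x)\rangle\,d\mathcal{H}^{N-1}(x).
\]
By convexity, each supporting hyperplane of $\Omega$ at a boundary point $x$ leaves $B_{R_\Omega}(x_0)$ on one side, so the distance from $x_0$ to this hyperplane, which coincides with $\langle x-x_0,\nu(x)\rangle$, is at least $R_\Omega$. Integrating this pointwise inequality produces the bound $|\Omega|\ge (R_\Omega/N)\,P(\Omega)$. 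Of course, for a general convex body the outward normal is defined only $\mathcal{H}^{N-1}$-a.e.\ on $\partial\Omega$, but the divergence formula and the supporting-hyperplane estimate remain valid in that weaker sense. Equality is achieved by balls, so the constant $1/N$ is sharp.

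\textbf{Upper bound.} Here I would use the interior parallel sets $\Omega_\tau=\{\delta_\Omega>\tau\}$, exactly as in Section \ref{sec:proof}. Since $|\nabla \delta_\Omega|=1$ a.e.\ in $\Omega$, the Coarea Formula gives
\[
|\Omega|=\int_0^{R_\Omega} P(\Omega_\tau)\,d\tau,
\]
using that $\{\delta_\Omega=\tau\}$ coincides $\mathcal{H}^{N-1}$-a.e.\ with $\partial\Omega_\tau$ for a convex set. The Main Theorem's proof already established that $\tau\mapsto P(\Omega_\tau)$ is monotone non-increasing with $P(\Omega_0)=P(\Omega)$; more strongly, for a bounded convex set the map is \emph{strictly} decreasing on $(0,R_\Omega)$, because $\Omega_\tau$ is a nonempty convex body strictly contained in $\Omega$ for every $\tau\in(0,R_\Omega)$. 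Hence
\[
|\Omega|=\int_0^{R_\Omega} P(\Omega_\tau)\,d\tau<R_\Omega\,P(\Omega),
\]
which is the strict inequality claimed. The main nuisance here is justifying $\int_0^{R_\Omega}P(\Omega_\tau)\,d\tau=|\Omega|$ rigorously for convex bodies (Coarea applied to the Lipschitz function $\delta_\Omega$, together with the identification of the level sets with the reduced boundaries of $\Omega_\tau$); this is technical but standard.

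\textbf{Sharpness of the upper bound.} To see that the constant $1$ cannot be improved, I would test on the slabs
\[
\Omega_L=\left(-\tfrac{L}{2},\tfrac{L}{2}\right)^{N-1}\times (0,1),
\]
already used elsewhere in the paper. A direct computation gives $R_{\Omega_L}=1/2$, $|\Omega_L|=L^{N-1}$ and $P(\Omega_L)=2\,L^{N-1}+2\,(N-1)\,L^{N-2}$, so
\[
\frac{|\Omega_L|}{P(\Omega_L)}\longrightarrow \frac{1}{2}=R_{\Omega_L}\qquad\text{as }L\to+\infty,
\]
which shows that the constant $1$ on the right cannot be replaced by anything smaller, even though the inequality is strict for every bounded convex set. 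I do not expect any step of this plan to be a serious obstacle: the only point that needs some care is the rigorous treatment of Coarea and the monotonicity of $P(\Omega_\tau)$ on a general (possibly non-smooth) convex body, but these facts are available in standard references on convex geometry.
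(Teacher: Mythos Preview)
Your proposal is correct and follows essentially the same route as the paper: the lower bound via the divergence theorem applied to $x\mapsto x-x_0$ together with the supporting-hyperplane estimate, and the upper bound via the Coarea formula for $\delta_\Omega$ combined with the monotonicity of $\tau\mapsto P(\Omega_\tau)$. Your discussion of sharpness (balls for the lower bound, the slab sequence $\Omega_L$ for the upper bound) also matches the paper's; the only extra ingredient in the paper is the observation that shrinking cones give a second asymptotic equality family for the lower bound, and conversely you are slightly more explicit than the paper in justifying the \emph{strictness} of the upper bound.
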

\begin{proof}
The upper bound simply follows from the Coarea Formula applied with the distance function $\delta_\Omega$. By still using the notation
\[
\Omega_\tau=\{x\in\Omega\, :\, \delta_\Omega(x)>\tau\},
\]
we have
\[
|\Omega|=\int_0^{R_\Omega} P(\Omega_\tau)\,d\tau\le R_\Omega\,P(\Omega),
\]
where we used that
\[
R_\Omega=\sup_{\Omega} \delta_\Omega,
\]
and that $t\mapsto P(\Omega_\tau)$ is monotone decreasing, thanks to the convexity of $\Omega$.
\vskip.2cm\noindent
As for the lower bound, let $x_0\in \Omega$ be such that $B_{R_\Omega}(x_0)\subset\Omega$. By the Divergence Theorem, we get
\[
|\Omega|=\frac{1}{N}\,\int_\Omega \mathrm{div}(x-x_0)\,dx=\frac{1}{N}\,\int_{\partial \Omega} \langle x-x_0,\nu(x)\rangle\,d\mathcal{H}^{N-1}.
\]
We now observe that for every $x\in\partial \Omega$, by convexity
\[
\overline{B_{R_\Omega}(x_0)}\subset \overline{\Omega} \subset \Big\{y\in\mathbb{R}^N\, :\, \langle y-x_0,\nu(x)\rangle\le \langle x-x_0,\nu(x)\rangle \Big\}.
\]
In particular, by taking the point $x_0+R_\Omega\,\nu(x)\in\partial B_{R_\Omega}(x_0)$, we get
\[
R_\Omega=\langle (x_0+R_\Omega\,\nu(x))-x_0,\nu(x)\rangle\le \langle x-x_0,\nu(x)\rangle.
\]
By inserting this above, we get
\[
|\Omega| \ge \frac{R_\Omega}{N}\,\int_{\partial \Omega} d\mathcal{H}^{N-1},
\]
which concludes the proof.
\end{proof}
\begin{oss}[Sharpness of the upper bound]
The inequality 
\[
\frac{|\Omega|}{P(\Omega)}< R_\Omega,
\]
is strict and we asymptotically have equality on the slab--type sequence
\[
\Omega_L=\left(-\frac{L}{2},\frac{L}{2}\right)^{N-1}\times\left(0,1\right).
\]
In other words, we have
\[
\lim_{L\to+\infty}\frac{|\Omega_L|}{R_{\Omega_L}\,P(\Omega_L)}=1.
\]
\end{oss}
\begin{oss}[Sharpness of the lower bound]
Here the identification of equality cases is quite subtle. A first family of sets giving the equality in
\[
\frac{R_\Omega}{N}\le \frac{|\Omega|}{P(\Omega)},
\]
is obviously given by balls. Indeed, if $\Omega$ is a ball with radius $R$, we have
\[
|\Omega|=\omega_N\,R^N,\qquad P(\Omega)=N\,\omega_N\,R^{N-1},\qquad R_\Omega=R.
\]
\par
However, by inspecting the proof, another family of sets naturally leads to equality: this is given by rotationally symmetric cones with shrinking opening. More precisely, for every $0<\alpha\ll 1$ let us define
\[
C_\alpha=\{x\in\mathbb{R}^N\, :\, |x|<1\,\mbox{ and }\, x_N>|x|\,\cos\alpha\}.
\]
Then we have
\[
|C_\alpha|=\frac{\omega_{N-1}}{N}\, (\tan\alpha)^{N-1}\,(\cos\alpha)^N+\int_{\cos\alpha}^1 \omega_{N-1}\,(1-t^2)^\frac{N-1}{2}\,dt\sim \frac{\omega_{N-1}}{N}\,\alpha^{N-1},
\] 
\[
P(C_\alpha)=N\,|C_\alpha|+\omega_{N-1}\,(\sin\alpha)^{N-2}\sim \omega_{N-1}\,\alpha^{N-2},
\]
and
\[
R_{C_\alpha}=\frac{\sin\alpha}{\sin\alpha+1}\sim \alpha.
\]
This implies that
\[
\lim_{\alpha\to 0^+}\frac{|C_\alpha|}{R_{C_\alpha}\, P(C_{\alpha})}=\frac{1}{N}.
\]
\end{oss}
\begin{prop}
\label{lm:geometrico}
Let $N\ge 2$, for every $\Omega\subset\mathbb{R}^N$ open bounded convex set, we have
\begin{equation}
\label{ue}
\gamma_N\,\mathrm{diam}(\Omega)\le \frac{P(\Omega)}{R_\Omega^{N-2}},
\end{equation}
where the dimensional constant $\gamma_N$ is given by
\[
\gamma_N=\frac{1}{2}\,\min\left\{\omega_{N-1}\,\left(\frac{3}{4}\right)^\frac{N-1}{2},\,\frac{N\,\omega_N}{2}\right\}.
\]
In particular, we also get
\begin{equation}
\label{ueil}
\gamma_N\,\mathrm{diam}(\Omega)\le \left(\frac{P(\Omega)}{|\Omega|^\frac{N-2}{N-1}}\right)^{N-1}.
\end{equation}
\end{prop}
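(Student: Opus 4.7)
The plan is to set $r = R_\Omega$ and $D = \mathrm{diam}(\Omega)$, fix an inscribed ball $B_r(x_0)\subset\Omega$, and bound $P(\Omega)$ from below by exploiting the classical perimeter monotonicity $P(A)\le P(B)$ for nested convex sets $A\subset B$. I would split the argument into two regimes according to the ratio $D/r$. In the \emph{round regime} $D<4r$, monotonicity applied to $B_r(x_0)\subset\Omega$ immediately yields $P(\Omega)\ge N\omega_N\,r^{N-1} > (N\omega_N/4)\,D\,r^{N-2}$, which is \eqref{ue} with constant $N\omega_N/4\ge \gamma_N$ in this regime.

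In the \emph{elongated regime} $D\ge 4r$, I would pick $p,q\in\overline{\Omega}$ realizing the diameter; the triangle inequality forces $\max(|p-x_0|,|q-x_0|)\ge D/2\ge 2r$. Choosing $p$ to attain this maximum and setting $d := |p-x_0|$, form the convex subset $K = \mathrm{conv}(\overline{B_r(x_0)}\cup\{p\})\subset\overline{\Omega}$. Its boundary is a spherical cap on $\partial B_r(x_0)$ glued to the lateral surface of the tangent cone from $p$ to $\partial B_r(x_0)$, so $P(\Omega)\ge P(K)$ is at least the $(N-1)$-dimensional area of that cone. A standard tangent-line computation gives slant length $L=\sqrt{d^2-r^2}$ and tangency radius $\rho=rL/d$, and a polar parameterization of the cone (apex $p$, base an $(N-2)$-sphere of radius $\rho$, slant $L$) yields lateral area $\omega_{N-1}\,\rho^{N-2}\,L = \omega_{N-1}\,r^{N-2}\,L^{N-1}/d^{N-2}$. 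Since $d\ge 2r$ forces $L^2\ge \tfrac{3}{4}d^2$, combined with $d\ge D/2$ this gives
\[
P(\Omega)\ge \omega_{N-1}\left(\tfrac{3}{4}\right)^{(N-1)/2}r^{N-2}\,d\ge \tfrac{1}{2}\,\omega_{N-1}\left(\tfrac{3}{4}\right)^{(N-1)/2}D\,r^{N-2}.
\]
Taking the worse of the two regimes produces exactly the $\gamma_N$ of the statement, establishing \eqref{ue}.

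To pass from \eqref{ue} to \eqref{ueil}, I would invoke Lemma \ref{lm:makai}: the upper bound $|\Omega|/P(\Omega)<R_\Omega$ yields $R_\Omega^{N-2}\ge(|\Omega|/P(\Omega))^{N-2}$ for $N\ge 2$; substituting into \eqref{ue} and clearing denominators gives $\gamma_N\,\mathrm{diam}(\Omega)\,|\Omega|^{N-2}\le P(\Omega)^{N-1}$, which is \eqref{ueil}. The main obstacle here is more conceptual than technical: one must spot the cone construction and correctly decompose $\partial K$ as a spherical cap glued to a tangent cone, recognizing that discarding the cap is precisely what leaves a clean $r^{N-2}$ in the bound. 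Once this geometric picture is in place the surface-area computation via polar coordinates is routine, and the two dimensional constants $\omega_{N-1}(3/4)^{(N-1)/2}$ and $N\omega_N/2$ balance exactly to produce $\gamma_N$.
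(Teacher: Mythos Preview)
Your proof is correct and follows essentially the same approach as the paper's: the same $D\lessgtr 4R_\Omega$ case split, the same use of perimeter monotonicity against the inscribed ball in the round regime, and the same tangent-cone construction $\mathrm{conv}(\overline{B_{R_\Omega}(x_0)}\cup\{p\})$ with the lateral-area computation $\omega_{N-1}\rho^{N-2}L$ in the elongated regime. The passage to \eqref{ueil} via Lemma~\ref{lm:makai} is likewise identical.
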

\begin{proof}
We first observe that it is sufficient to prove \eqref{ue}, then \eqref{ueil} follows by estimating the inradius from below with the aid of Lemma \ref{lm:makai}.
\par
In order to prove \eqref{ue}, we set for simplicity
\[
d=\mathrm{diam}(\Omega).
\]
We then observe that if $d\le 4\,R_\Omega$, then we get
\[
d\le 4\,\frac{N\,\omega_N\,R_\Omega^{N-1}}{N\,\omega_N\,R_\Omega^{N-2}}\le \frac{4}{N\,\omega_N}\,\frac{P(\Omega)}{R_\Omega^{N-2}}.
\]
In the last inequality we used that $N\,\omega\,R_\Omega^{N-1}$ is the perimeter of a largest inscribed ball, together with the usual fact that the perimeter is monotone with respect to set inclusion, on convex sets. Thus \eqref{ue} is proved, under the assumption $d\le 4\,R_\Omega$.
\par
We now suppose that $d>4\,R_\Omega$ and take two points $x,y\in\partial\Omega$ such that $|x-y|=d$. By convexity, we have that $\Omega$ contains the cone obtained as the convex envelope of $\{x\}$ and a ball $B_{R_{\Omega}}(x_0)\subset \Omega$. Similarly, it contains the convex envelope of $\{y\}$ and of the same ball $B_{R_{\Omega}}(x_0)$. If we call $\mathcal{C}(x)$ and $\mathcal{C}(y)$ these two cones, we thus have
\[
\mathcal{C}(x)\cup \mathcal{C}(y)\subset \Omega.
\]
By using again the monotonicity of the perimeter for convex sets, we get that
\begin{equation}
\label{fiuuu}
\max\Big\{P(\mathcal{C}(x)),\,P(\mathcal{C}(y))\Big\}\le P(\Omega).
\end{equation}
By construction, we have that at least one between $|x-x_0|$ and $|y-y_0|$ must be greater than or equal to $d/2$. Without loss of generality, we suppose that $|x-x_0|\ge d/2$. 
Then it is not difficult to see that the perimeter of $P(\mathcal{C}(x))$ can be estimated from below as follows
\[
\begin{split}
P(\mathcal{C}(x))> P(\mathcal{C}(x)\setminus\partial B_R(x_0))&=\omega_{N-1}\,R_\Omega^{N-2}\,|x-x_0|\,\left(1-\frac{R_\Omega^2}{|x-x_0|^2}\right)^\frac{N-1}{2}\\
&\ge \frac{\omega_{N-1}}{2}\,R_\Omega^{N-2}\,d\,\left(1-\frac{4\,R_\Omega^2}{d^2}\right)^\frac{N-1}{2}\\
&> \frac{\omega_{N-1}}{2}\,\left(\frac{3}{4}\right)^\frac{N-1}{2}\,R_\Omega^{N-2}\,d.
\end{split}
\]
In the second inequality, we used the hypothesis $d>4\,R_\Omega$.
\begin{figure}[h]
\includegraphics[scale=.3]{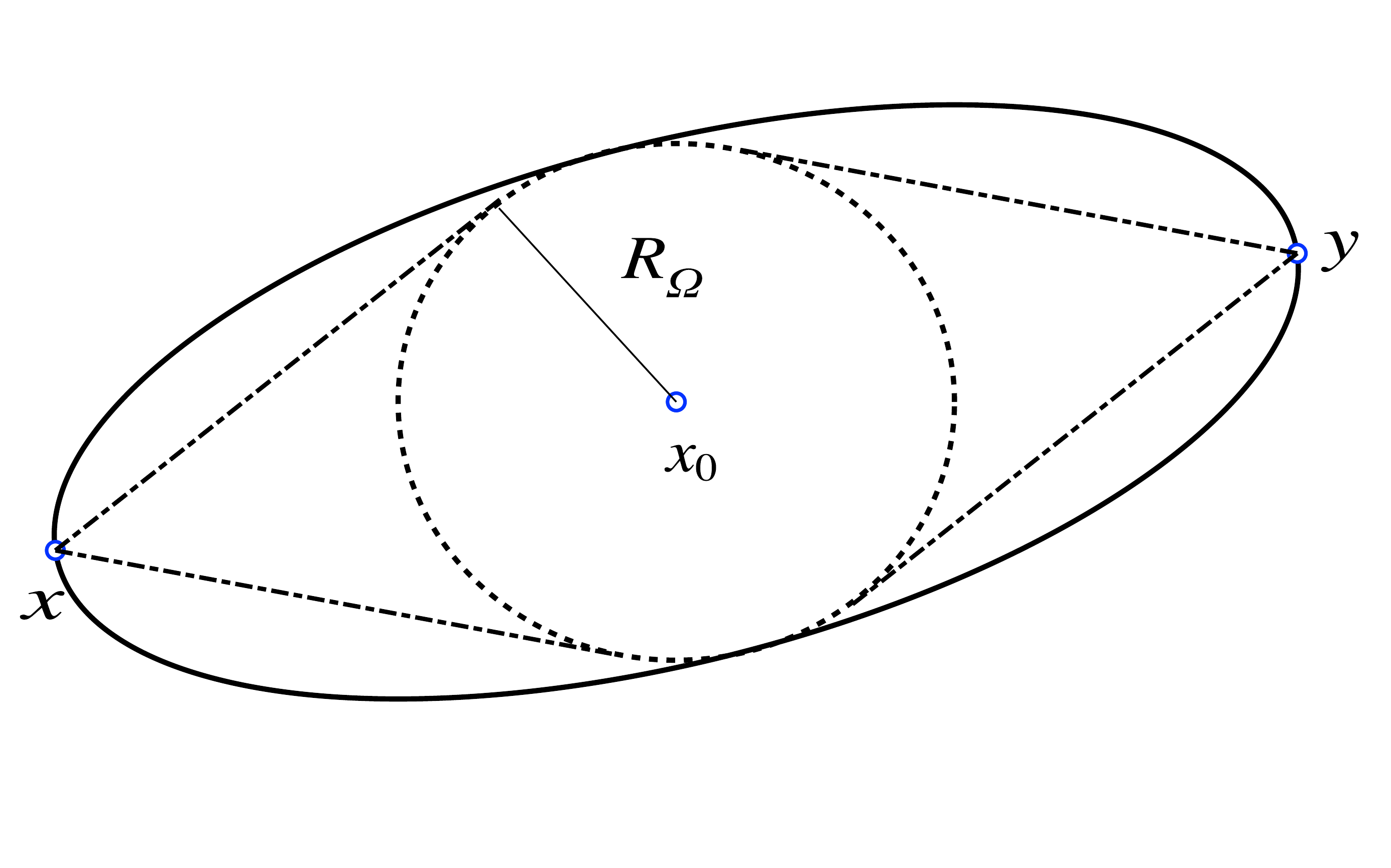}
\caption{The construction for the proof of Proposition \ref{lm:geometrico}, in the case $\mathrm{diam}(\Omega)>4\,R_\Omega$.}
\end{figure}
\par
Thus, by recalling the definition of $d$ and inserting the last estimate in \eqref{fiuuu}, we obtain
\[
\frac{\omega_{N-1}}{2}\,\left(\frac{3}{4}\right)^\frac{N-1}{2}\,R_\Omega^{N-2}\,\mathrm{diam}(\Omega)\le P(\Omega),
\]
which proves \eqref{ue} in the case $\mathrm{diam}(\Omega)>4\,R_\Omega$, as well.
\end{proof}
\begin{oss}[Convexity matters]
We can observe that Proposition \ref{lm:geometrico} does not hold for $N\ge 3$, if the convexity assumption is removed. Indeed, take the sequence of cylinders
\[
T_n=\left\{(x',x_N)\in\mathbb{R}^N\, :\, |x'|<\frac{1}{n}\quad \mbox{ and }\quad |x_n|<n\right\},
\]
for which
\[
\lim_{n\to\infty}\mathrm{diam}(T_n)=+\infty\qquad \mbox{ and }\qquad \lim_{n\to\infty}P(T_n)=\lim_{n\to\infty} \omega_{N-1}\,\frac{1}{n^{N-3}}<+\infty. 
\]
Then define the sets
\[
\Omega_n=B_1(0,\dots,0,-n)\cup T_n\cup B_1(0,\dots,0,n).
\]
We have
\[
|\Omega_n|\ge 2\,\omega_N>0\qquad \mbox{ and }\qquad \lim_{n\to\infty} \mathrm{diam}(\Omega_n)=+\infty,
\]
while
\[
\lim_{n\to\infty} P(\Omega_n)\le 2\,N\,\omega_N+\lim_{n\to\infty}P(T_n)<+\infty.
\]
On the contrary, for $N=2$ the convexity assumption can be dropped. Indeed, in this case the perimeter of a set decreases under convexification, while the diameter is unchanged. Thus the validity of the estimate for bounded convex sets entails that this is valid for general open bounded sets, as well.
\end{oss}
In the proof of Theorem \ref{teo:existence}, we needed the following result for convex sets. If one is not interested in sharp constants, the proof is an easy combination of the estimates above and the isoperimetric inequality. Nevertheless, the final outcome is quite sophisticated, as it mixes four different geometric quantities.
\begin{prop}
\label{prop:inradius}
Let $(N-1)/N<\alpha$ and let $\Omega\subset\mathbb{R}^N$ be an open bounded convex set. Then:
\begin{itemize}
\item if $\alpha<1$ there exists $C_1=C_1(N,\alpha)>0$ such that
\begin{equation}
\label{good}
C_1\,\mathrm{diam}(\Omega)\le R_\Omega^{\alpha\,\frac{N-1}{1-\alpha}}\,\left(\frac{P(\Omega)}{|\Omega|^\alpha}\right)^{\frac{1}{1-\alpha}\,\frac{\alpha\,N-1}{\alpha\,N-(N-1)}};
\end{equation}
\item if $\alpha>1$ there exists $C_2=C_2(N,\alpha)>0$ such that
\begin{equation}
\label{bad}
C_2\,\mathrm{diam}(\Omega)\le \frac{1}{R_\Omega^{\alpha\,\frac{N-1}{\alpha-1}}}\,\left(\frac{|\Omega|^\alpha}{P(\Omega)}\right)^{\frac{1}{\alpha-1}\,\frac{\alpha\,N-1}{\alpha\,N-(N-1)}}
\end{equation}
\end{itemize}
\end{prop}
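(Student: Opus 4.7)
My plan is to combine three geometric ingredients already established: Proposition~\ref{lm:geometrico} (which relates $\mathrm{diam}(\Omega)$, $R_\Omega$ and $P(\Omega)$), Lemma~\ref{lm:makai} (which says that $|\Omega|/P(\Omega)$ is comparable to $R_\Omega$), and the trivial inclusion $B_{R_\Omega}(x_0)\subset\Omega$, which yields $|\Omega|\ge \omega_N\,R_\Omega^N$. For brevity I write $D=\mathrm{diam}(\Omega)$, $R=R_\Omega$, $V=|\Omega|$, $P=P(\Omega)$, so the three inputs read
\[
P\ge \gamma_N\,D\,R^{N-2},\qquad \frac{V}{R}\le P\le \frac{N\,V}{R},\qquad V\ge \omega_N\,R^N.
\]

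In the case $\alpha<1$, set $a=\alpha\,(N-1)/(1-\alpha)$ and $b=(1/(1-\alpha))\,(\alpha\,N-1)/(\alpha\,N-(N-1))$; these are the exponents appearing in \eqref{good}, both positive in the admissible range. An elementary computation shows that $b\ge 1$ (the corresponding quadratic $N\,\alpha^2-(N-1)\,\alpha+(N-2)$ is nonnegative throughout our range, because $N\ge 2$) and that $b\,(1-\alpha)-1=(N-2)/(\alpha\,N-(N-1))\ge 0$. I can then split $P^b=P\cdot P^{b-1}$ and use the two lower bounds on $P$ respectively, obtaining $P^b\ge (\gamma_N\,D\,R^{N-2})\,(V/R)^{b-1}$. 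Dividing by $V^{\alpha\,b}$ and multiplying by $R^a$ gives
\[
R^a\,\left(\frac{P}{V^\alpha}\right)^b\ge \gamma_N\,D\,V^{b(1-\alpha)-1}\,R^{a+N-1-b}.
\]
Now I invoke $V\ge \omega_N\,R^N$, whose exponent is nonnegative, to eliminate $V$; the residual power of $R$ must be zero, and this is forced by the scale-invariance of \eqref{good} (which I check separately). This produces \eqref{good} with $C_1=\gamma_N\,\omega_N^{(N-2)/(\alpha\,N-(N-1))}$.

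The case $\alpha>1$ runs dually by producing three lower bounds on $V$: first, $V\ge \gamma_N\,D\,R^{N-1}/N$, obtained by feeding the upper bound $P\le N\,V/R$ of Lemma~\ref{lm:makai} into Proposition~\ref{lm:geometrico}; second, $V\ge R\,P/N$; and third, $V\ge \omega_N\,R^N$. Setting $a'=\alpha\,(N-1)/(\alpha-1)$ and $b'=(1/(\alpha-1))\,(\alpha\,N-1)/(\alpha\,N-(N-1))$, I multiply these three inequalities raised to the powers $1$, $b'$, and $(N-2)/(\alpha\,N-(N-1))$ respectively. All three exponents are nonnegative and, by the same identity that made the first part work, sum exactly to $\alpha\,b'$. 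The outcome is a bound of the shape $V^{\alpha\,b'}\ge C'\,D\,P^{b'}\,R^{a'}$, which rearranges to \eqref{bad}.

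The bulk of the work in both parts is algebraic bookkeeping: verifying the nonnegativity conditions on the exponents used to combine the geometric inputs and checking that the residual power of $R$ collapses to the claimed value. The scale-invariance of the target inequalities makes this collapse automatic once the other exponents are chosen, so the main obstacle I anticipate is simply organising the exponent matching without error; no genuinely new geometric fact beyond those already established in Lemma~\ref{lm:makai}, Proposition~\ref{lm:geometrico}, and the inscribed-ball inclusion is needed.
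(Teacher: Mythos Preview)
Your argument is correct. The route is close to the paper's but differs in two points worth noting. First, the paper exploits scale invariance to normalize $P(\Omega)/|\Omega|^\alpha=1$ at the outset, which collapses the exponent bookkeeping to a couple of lines; you keep all four quantities in play and match exponents directly, which is equivalent but heavier. Second, and more substantively, the paper's ``third ingredient'' is the isoperimetric inequality (giving a lower bound on $|\Omega|$ after normalization), whereas you use the inscribed--ball bound $|\Omega|\ge \omega_N R_\Omega^{\,N}$. Both choices, together with Lemma~\ref{lm:makai} and Proposition~\ref{lm:geometrico}, suffice; your version is slightly more self-contained in that it avoids invoking the isoperimetric inequality, at the cost of the extra algebra. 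The constants produced differ, but since the statement only asks for \emph{some} $C_1,C_2>0$ this is immaterial.
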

\begin{proof}
Both estimates are scale invariant, thus since $\alpha>(N-1)/N$ we can assume without loss of generality that
\[
\frac{P(\Omega)}{|\Omega|^\alpha}=1.
\]
Let us assume that $\alpha<1$, by using the second inequality of Lemma \ref{lm:makai}, we then get
\[
1=\frac{|\Omega|^\alpha}{P(\Omega)}\le R_{\Omega}^{\alpha}\,P(\Omega)^{\alpha-1},\qquad \mbox{ i.\,e. }\qquad P(\Omega)\le R_{\Omega}^\frac{\alpha}{1-\alpha}.
\]
On the other hand, by using the isoperimetric inequality, we have
\[
1=\frac{|\Omega|^\alpha}{P(\Omega)}\le \frac{1}{N\,\omega_N^{1/N}}\,|\Omega|^{\alpha-\frac{N-1}{N}},\qquad \mbox{ i.\,e. } \qquad |\Omega|\ge (N\,\omega_N)^\frac{1}{\alpha-\frac{N-1}{N}}.
\]
By spending the last two informations in \eqref{ueil}, we finally get
\[
\left((N\,\omega_N)^\frac{N-2}{\alpha-\frac{N-1}{N}}\,\gamma_N\right)\,\mathrm{diam}(\Omega)\le  R_\Omega^{\alpha\,\frac{N-1}{1-\alpha}},
\]
which gives the desired conclusion.
\vskip.2cm\noindent
The case $\alpha>1$ is similar, we only need to use the first inequality in Lemma \ref{lm:makai}, which now yields
\[
1=\frac{|\Omega|^\alpha}{P(\Omega)}\ge \frac{R_{\Omega}^{\alpha}}{N^\alpha}\,P(\Omega)^{\alpha-1},\qquad \mbox{ i.\,e. }\qquad P(\Omega)\le \frac{N^\frac{\alpha}{\alpha-1}}{R_{\Omega}^\frac{\alpha}{\alpha-1}}.
\]
The rest of the proof goes as before, we leave the details to the reader.
\end{proof}
\begin{oss}[The borderline case $\alpha=1$]
For $\alpha=1$ it is not possible to have a control from above on $\mathrm{diam}(\Omega)$, in terms of
\[
R_\Omega\qquad \mbox{ and }\qquad \frac{P(\Omega)}{|\Omega|}.
\]
Indeed, by still taking the slab--type sequence
\[
\Omega_L=\left(-\frac{L}{2},\frac{L}{2}\right)^{N-1}\times (0,1),
\]
we have seen in \eqref{rapportino} that
\[ 
\frac{P(\Omega_L)}{|\Omega_L|}\sim 2,\qquad \mbox{ for }L\to +\infty,
\]
while $R_{\Omega_L}=1/2$, for $L>1$. On the other hand, it is easily seen that
\[
\mathrm{diam}(\Omega_L)\to +\infty,\qquad \mbox{ for }L\to +\infty.
\]
\end{oss}


\begin{thebibliography}{100}

\bibitem{AC}  F. Alter, V. Caselles, Uniqueness of the Cheeger set of a convex body, Nonlinear Anal., {\bf 70} (2009), 32--44.

\bibitem{AK} D. H. Armitage, \"U. Kuran, The Convexity of a Domain and the Superharmonicity of the Signed Distance Function, Proc. Amer. Math. Soc., {\bf 93} (1985), 598--600.

\bibitem{BFR} L. Brasco, G. Franzina, B. Ruffini, Schr\"odinger operators with negative potentials and Lane-Emden densities, J. Funct. Anal., {\bf 274} (2018), 1825--1863. 

\bibitem{BNT} L. Brasco, C. Nitsch, C. Trombetti, An inequality \`a la Szeg\H{o}-Weinberger for the $p-$Laplacian, Commun. Contemp. Math., {\bf 18} (2016), 1550086, 23 pp.

\bibitem{BR} L. Brasco, B. Ruffini, Compact Sobolev embeddings and torsion functions,  Ann. Inst. H. Poincar\'e Anal. Non Lin\'eaire, {\bf 34} (2017), 817--843.

\bibitem{bu}
P. Buser, A note on the isoperimetric constant,  Ann. Sci. \'Ecole Norm. Sup. (4), {\bf 15} (1982), 213--230. 

\bibitem{cheeger}
J. Cheeger, A lower bound for the smallest eigenvalue of the Laplacian, In \emph{Problems in Analysis} (A symposium in honor of S. Bochner), Princeton University Press (1970), 195--199.

\bibitem{DPG} F. Della Pietra, N. Gavitone, Sharp bounds for the first eigenvalue and the torsional rigidity related to some anisotropic operators, Math. Nachr., {\bf 287} (2014), 194--209. 


\bibitem{FL} G. Franzina, P. D. Lamberti, Existence and uniqueness for $p-$Laplacian nonlinear eigenvalue problems, Electron. J. Differential Equations, {\bf 26} (2010), 10 pp.

\bibitem{HP} A. Henrot, M. Pierre, Variation et optimisation de formes. (French) [Shape variation and optimization] 
Une analyse g\'om\'etrique. [A geometric analysis] Math\'ematiques \& Applications (Berlin) [Mathematics \& Applications], {\bf 48}. Springer, Berlin, 2005.


\bibitem{JS} I. Jo\'o, L. Stach\'o, Generalization of an inequality of G. P\'olya concerning the eigenfrequences of vibrating bodies,
Publ. Inst. Math. (Beograd), {\bf 31} (1982), 65--72. 

\bibitem{Le2} M. Ledoux, Spectral gap, logarithmic Sobolev constant, and geometric bounds,
Surv. Differ. Geom., {\bf 9}, Int. Press, Somerville, MA, 2004.

\bibitem{Le}
M. Ledoux, A simple analytic proof of an inequality by P. Buser, Proc. Amer. Math. Soc., {\bf 121} (1994), 951--959. 

\bibitem{LW} L. Lefton, D. Wei, Numerical approximation of the first eigenpair of the p-Laplacian using finite elements and the penalty method, Numer. Funct. Anal. Optim., {\bf 18} (1997), 389--399.  

\bibitem{maz} V. Maz'ya, Sobolev spaces, Sobolev spaces with applications to elliptic partial differential equations. Second, revised and augmented edition. Grundlehren der Mathematischen Wissenschaften [Fundamental Principles of Mathematical Sciences], {\bf 342}. Springer, Heidelberg, 2011. 

\bibitem{Pa} E. Parini, Reverse Cheeger inequality for planar convex sets, J. Convex Anal., {\bf 24} (2017), 107--122.

\bibitem{Po} G. P\'{o}lya, Two more inequalities between physical and geometrical quantities, J. Indian Math. Soc., {\bf 24} (1960), 413--419.

\bibitem{RW} X. Ren, J. Wei, Counting peaks of solutions to some quasilinear elliptic equations with large exponents, J. Differential Equations, {\bf 117} (1995), 28--55.

\bibitem{Sch} R. Schneider, Convex bodies: the Brunn-Minkowski theory. Second expanded edition. Encyclopedia of Mathematics and its Applications, {\bf 151}. Cambridge University Press, Cambridge, 2014.

\end{thebibliography}
\end{document}